\newtheorem{Theorem}{Theorem}[section]
\newtheorem{Lemma}[Theorem]{Lemma}
\newtheorem{Proposition}[Theorem]{Proposition}
\newtheorem{Definition}[Theorem]{Definition}
\theoremstyle{definition}
\newtheorem{Remark}[Theorem]{Remark}
\def\l{\left}
\def\r{\right}
\newcommand{\seq}[1]{\l\{#1\r\}}
\newcommand{\bra}[1]{\l(#1\r)}
\newcommand{\abs}[1]{\l|#1\r|}
\def\norm#1{\left \Vert #1 \right \Vert}
\renewcommand{\hat}{\widehat}
\def\gap{\; \; \;}
\newcommand{\PP}{\mathbb{P}}
\newcommand{\EE}[1]{\mathbb{E}\ex{#1}}
\newcommand{\Ee}[1]{{\mathbb{E}}\ex{#1}}
\newcommand{\RR}{\mathbb{R}}
\newcommand{\NN}{\mathbb{N}}
\newcommand{\Gg}{\mathcal{G}}
\def\id{\textbf{\textup{1}}}
\newcommand{\Ll}{\mathcal{L}}
\newcommand{\Mm}{\mathcal{M}}
\newcommand{\Aa}{\mathcal{A}}
\def\EE{\mathbb{E}}
\def\Ww{\mathcal{W}}
\def\Ll{\mathcal{L}}
\def\Mm{\mathcal{M}}
\def\Aa{\mathcal{A}}
\newcommand{\Var}[1]{\textup{Var}({#1})}
\def\Ee{\mathcal{E}}
\numberwithin{equation}{section}
\numberwithin{figure}{section}
\begin{document}
\title{Wasserstein approximations of the L\'{e}vy area random walk via polynomial perturbations of Gaussian distributions}
\author{Guy Flint\\ \\ \small{\textit{ Mathematical Institute, University of Oxford, Woodstock Road, OX2 6GG, England}}}
\date{\today}
\maketitle

\begin{changemargin}{0.3cm}{0.3cm} 
\abstract{
We construct a coupling between the random walk composed of L\'{e}vy area increments from a $d$-dimensional Brownian motion and a random walk composed of quadratic polynomials of Gaussian random variables. This coupling construction is used to produce a new pathwise approximation scheme for stochastic differential equations in the preprint \cite{flintCoupling}. The coupling arguments of the present paper are based extensively on the recent coupling results of Davie in \cite{davie2014kmt,davie2014pathwise,davie2015poly} concerning a multidimensional variant of the Koml\'{o}s-Major-Tusn\'{a}dy theorem and Wasserstein estimates for polynomial perturbations of Gaussian measures.
}
\end{changemargin}

\smallskip
\smallskip
\noindent \textbf{Keywords.} {Wasserstein couplings, L\'{e}vy area, polynomial perturbations of Gaussian distributions.}

\smallskip
\noindent \texttt{Guy.Flint@maths.ox.ac.uk}

\section{Introduction}

Let $W : t \in[0,1]\mapsto (W_1(t), \ldots, W_d(t)) \in  \RR^d$ be a standard Brownian motion and introduce notation for the Brownian and L\'{e}vy area increments as follows: given $N\in\NN$ (and $h=N^{-1}$), we set
\begin{align}
W^{(j)}_k :&= W_k(jh,(j+1)h) := W_k((j+1)h)-W_k(jh);\notag\\
A^{(j)}_{kl} :&= \frac{1}{2}\int^{(j+1)h}_{jh} \seq{W_k(jh,t)\, dW_l(t) - W_l(jh,t)\, dW_k(t)} = A_{kl}(jh,(j+1)h)\label{e-original-increment}
\end{align}
and write $A^{(j)} = \{A^{(j)}_{kl}\}_{1\leq k < l \leq d} \in \RR^{\frac{d}{2}(d-1)}$. 
Simulation of L\'{e}vy area increments is important for the numerical approximation of stochastic differential equations (for example, the $N$-step Milstein scheme \cite{mil1974approximate} requires $N$ L\'{e}vy area increments in order to achieve a strong approximation error of order $O(N^{-1})$). 
In the case of $d=1$ it is a trivial exercise to generate iterated Brownian integrals but for $d\geq 2$, this task becomes a hard problem. In the case of $d=2$ efficient algorithms based on Fourier expansions are available for generating double integrals (that is, L\'{e}vy area increments $A^{(j)}$)  but they involve a significant computational cost  \cite{gaines1994random,ryden2001simulation,wiktorsson2001joint}. The general case of $d>2$ is still out of reach. The different coordinates of L\'{e}vy area are uncorrelated but not independent (as we will show in Lemma \ref{l-area-decomposition} below), which makes joint simulation extremely difficult. 

As an approximation of L\'{e}vy area, in the recent preprint \cite{davie2014kmt} Davie substitutes each $A^{(j)}$ with a suitable quadratic polynomial of Gaussian random variables denoted by $B^{(j)}$. The construction is such that $A^{(j)}$ and $B^{(j)}$ share the same mean and covariance structure along with the same underlying Brownian increment $W^{(j)}$. To introduce $B^{(j)}$ we must first closely examine the original Brownian L\'{e}vy area increments. The following lemma gives a simple decomposition of $A^{(j)}$ into parts dependent and independent of the corresponding Brownian increment $W^{(j)}$. 

\begin{Lemma}\label{l-area-decomposition}
For all $1\leq k < l \leq d$:
\begin{equation}\label{e-area-decomposition}
A^{(j)}_{kl} = \zeta^{(j)}_k W^{(j)}_l - \zeta^{(j)}_l W^{(j)}_k + K^{(j)}_{kl},
\end{equation}
where the $\zeta^{(j)}_k$: $k=1,\ldots,d$, $K^{(j)}_{kl}$: $1\leq k < l \leq d$, are mutually uncorrelated (but not independent), independent of $W^{(j)}$ and have mean zero. Moreover,  
\[
\Var{\zeta^{(j)}_k} = \frac{1}{12N} \gap\text{ and }\gap \Var{K^{(j)}_{kl}}=\frac{1}{12N^2}.
\] 
\end{Lemma}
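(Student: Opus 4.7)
The plan is to use the classical Brownian bridge decomposition of $W$ on each subinterval $[jh,(j+1)h]$. Fix $j$ and set $\tilde W_k(s) := W_k(jh,jh+s)$ for $s\in[0,h]$, so that $\tilde W$ is a standard $d$-dimensional Brownian motion on $[0,h]$ with $\tilde W_k(h) = W^{(j)}_k$. Writing
\[
\tilde W_k(s) = \frac{s}{h}\,W^{(j)}_k + B_k(s),
\]
the residual $B=(B_1,\ldots,B_d)$ is a $d$-dimensional Brownian bridge on $[0,h]$, independent of the endpoint vector $W^{(j)}$. I then define
\[
\zeta^{(j)}_k := \frac{1}{h}\int_0^h B_k(s)\,ds, \qquad K^{(j)}_{kl} := \frac{1}{2}\int_0^h \bigl\{B_k(s)\,dB_l(s) - B_l(s)\,dB_k(s)\bigr\};
\]
that is, $\zeta^{(j)}_k$ is the (normalised) time-integral of the bridge in direction $k$ and $K^{(j)}_{kl}$ is the L\'evy area of the bridges.

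Establishing (\ref{e-area-decomposition}) is then a direct substitution: each of $\int_0^h \tilde W_k\,d\tilde W_l$ and $\int_0^h \tilde W_l\,d\tilde W_k$ expands into four terms; the symmetric $\tfrac12 W^{(j)}_k W^{(j)}_l$ contributions cancel upon subtraction, and integration by parts $\int_0^h s\,dB_k(s) = -\int_0^h B_k(s)\,ds$ (using $B_k(0)=B_k(h)=0$) collects the remaining ``mixed'' terms into $\zeta^{(j)}_k W^{(j)}_l - \zeta^{(j)}_l W^{(j)}_k$, leaving exactly $K^{(j)}_{kl}$. Since $\zeta^{(j)}_k$ and $K^{(j)}_{kl}$ are measurable functions of the bridge $B$ alone, their independence from $W^{(j)}$ is immediate from the classical bridge property, and mean zero is automatic.

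For the uncorrelatedness, the independence of the coordinates of $\tilde W$ yields jointly independent bridges $B_1,\ldots,B_d$, so any two of our quantities that depend on disjoint subsets of $\{B_1,\ldots,B_d\}$ are independent and hence uncorrelated. The delicate cases are the overlapping-index pairs such as $\mathbb{E}[\zeta^{(j)}_k K^{(j)}_{kl}]$ and $\mathbb{E}[K^{(j)}_{kl} K^{(j)}_{km}]$; I will handle these by conditioning on the shared bridge coordinate, after which the remaining factor is a centred linear functional of an independent bridge and therefore has vanishing conditional expectation (equivalently, $\zeta^{(j)}_k$ lies in the first Wiener chaos and $K^{(j)}_{kl}$ in the second, so the two are $L^2$-orthogonal). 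For the variances, a direct computation using the bridge covariance $\mathbb{E}[B_k(s)B_k(t)] = \min(s,t) - st/h$ gives $\Var{\zeta^{(j)}_k} = h^{-2}\int_0^h\int_0^h (\min(s,t) - st/h)\,ds\,dt = h/12 = 1/(12N)$; and $\Var{K^{(j)}_{kl}}$ is recovered most cheaply from (\ref{e-area-decomposition}) via the standard It\^{o} calculation $\Var{A^{(j)}_{kl}} = h^2/4$, which combined with the established uncorrelatedness forces $\Var{K^{(j)}_{kl}} = h^2/4 - 2(h/12)h = h^2/12 = 1/(12N^2)$. The only genuinely subtle step is the overlapping-index uncorrelatedness check, which the conditioning/chaos argument resolves cleanly.
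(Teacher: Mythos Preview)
Your proof is correct and follows essentially the same Brownian bridge decomposition as the paper: the paper rescales to standard bridges on $[0,1]$ and invokes the general iterated-integral expansion from \cite{davie2014pathwise}, whereas you work directly with bridges on $[0,h]$ and substitute by hand, but the resulting $\zeta^{(j)}_k$ and $K^{(j)}_{kl}$ are identical. If anything your argument is more self-contained, since the paper merely asserts the uncorrelatedness and defers the variance computations to \cite{davie2014pathwise}, while you supply both via the conditioning/chaos observation and the $\Var{A^{(j)}_{kl}}=h^2/4$ bookkeeping.
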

\begin{proof}
We suppose $j=0$ for simplicity and begin by decomposing the $A^{(0)}$ increment into parts dependent and independent of the Brownian increment $W(h)$. To this end, following \cite[\S7]{davie2014pathwise} we can write
\[
W_k(t)=h^{1/2}B_k(t/h)+th^{-1/2}Z_k \gap t\in [0,h],
\] 
where $B_1,\ldots,B_d$ are independent standard Brownian bridges on $[0,1]$ and $Z_k=h^{-1/2}W_k(h)$ are independent $N(0,1)$ random variables (which are independent of the $B_j$). Also write $B_0(t)=t$ and set 
\[
K_\alpha := \int^1_0 \int^{t_l}_0 \ldots \int^{t_2}_0 dB_{j_1}(t_1)\ldots dB_{j_l}(t_l)
\] 
for an index $\alpha=(j_1,\ldots,j_l) \in \{0,1\ldots,d\}^l$. For such an index it can be shown that (see \cite[\S7]{davie2014pathwise})
\[
I_\alpha := \int^h_0 \int^{t_l}_0 \ldots \int^{t_2}_0 dW_{j_1}(t_1)\ldots dW_{j_l}(t_l) = h^{(l(\alpha)+n(\alpha))/2}\sum_{\beta=(i_1,\ldots,i_l)} K_\beta \prod_{k:i_k<j_k} Z_{j_k},
\]
where the sum is over all $\beta=(i_1,\ldots,i_l)$ such that for each $k\in\seq{1,\ldots,l}$ we have either $i_k=j_k$ or $i_k=0<j_k$. Here we have used $l(\alpha)$ and $n(\alpha)$ to denote the length and number of zero entries of $\alpha$ respectively. 
Noting the antisymmetry $K_{kl}=-K_{lk}$ for $0\leq k < l$, it follows that 
\begin{align*}
A^{(0)}_{12} 
&= \frac{1}{2}\bra{I_{12}-I_{21}} = h\bra{K_{10}Z_2 - K_{20}Z_1+K_{12}},
\end{align*}
where 
\[
K_{12} = \int^1_0 B_1(t)\, dB_2(t) \textup{ and } K_{j0} = \int^1_0 B_j(t)\, dt \textup{ for } j\in\seq{1,2}. 
\]
Thus $\zeta^{(0)}_j := h^{1/2}K^{(0)}_{j0}$ for $j=1,2$, and $K^{(0)}_{12} := hK_{12}$ gives the claimed decomposition. 
The variances follow from It\^{o}'s isometry. For the calculations we refer to Lemma 7 of \cite{davie2014pathwise}. 
\end{proof}

The fact that for fixed $j$, the increments $\zeta^{(j)}$ and $K^{(j)}$ are not independent makes them (and consequently $A^{(j)}$) very difficult to simulate numerically. A natural solution would be to approximate these two variables with normal random variables $z^{(j)}$, $\lambda^{(j)}$ with the correct mean and moments, to produce a Gaussian approximation $B^{(j)}$ for $A^{(j)}$. Since uncorrelated Gaussian random variables are necessarily independent, simulation is much easier. This is precisely what Davie proposes in \cite{davie2014kmt}; define the following independent normal random variables
\begin{align*}
W^{(j)} \sim N\bra{0,\frac{1}{N}I_d}, \,\,\,\, z^{(j)} &\sim N\bra{0,\frac{1}{12N}I_d},\notag\\ 
\lambda^{(j)} = (\lambda^{(j)}_{kl})_{1\leq k < l \leq d} &\sim N\Big(0,\frac{1}{12N^2}I_{\frac{d(d-1)}{2}}\Big),\label{e-normal}
\end{align*}
for each $j \in \{0,1,\ldots,N-1\}$. Then set $\{B^{(j)}\}_{1\leq k < l \leq d}$ to be quadratic polynomial
\begin{equation}\label{e-b-def}
B^{(j)}_{kl} := z^{(j)}_k W_l^{(j)} - z_l^{(j)}W_k^{(j)} + \lambda_{kl}^{(j)}. 
\end{equation}

The main theorem of this paper constructs a coupling of the two random walks with respective increments $A^{(j)}$ and $B^{(j)}$ conditional on sharing the same underlying Brownian increments $W^{(j)}$. 

\begin{Theorem}\label{t-coupling}
For every $p\in [1,\infty)$, there exists a constant $C_p>0$ such that the following holds: given $N\in\NN$, we can construct a coupling between the i.i.d.~sequence $\{A^{(j)}\}_{j=0}^{N-1}$ and i.i.d.~$\{B^{(j)}\}_{j=0}^{N-1}$ random variables, with each pair $(A^{(j)}, B^{(j)})$ defined by (\ref{l-area-decomposition}) and (\ref{e-b-def}) using a common Brownian increment $W^{(j)} \sim N(0,N^{-1}I_d)$, such that 
\[
\max_{r=1,\ldots,N} \abs{\norm{ \sum^{r-1}_{j=0} \bra{A^{(j)}-B^{(j)}}}_{\RR^{\frac{d}{2}(d-1)}}}_{L^p} \leq C_p \frac{\log N}{N}. 
\]
\end{Theorem}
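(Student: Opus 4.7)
The starting point is the per-increment identity
\[
A^{(j)}_{kl} - B^{(j)}_{kl} = \bra{\zeta^{(j)}_k - z^{(j)}_k}W^{(j)}_l - \bra{\zeta^{(j)}_l - z^{(j)}_l}W^{(j)}_k + \bra{K^{(j)}_{kl} - \lambda^{(j)}_{kl}}.
\]
The key observation, visible in the proof of Lemma \ref{l-area-decomposition}, is that $\zeta^{(j)}_k = h^{1/2}\int_0^1 B_k(t)\,dt$ is a linear functional of the Gaussian process $B_k$, and hence is itself a centred Gaussian with variance $(12N)^{-1}$---exactly the marginal law of $z^{(j)}_k$.  I would therefore set $z^{(j)} := \zeta^{(j)}$, which instantly kills the two bilinear cross terms.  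The problem then reduces to constructing i.i.d.\ random vectors $\lambda^{(j)} \sim N\bra{0,(12N^2)^{-1}I_{d(d-1)/2}}$, each independent of $\zeta^{(j)}$ and of $W^{(j)}$ (so that $(W^{(j)},z^{(j)},\lambda^{(j)})$ realises the joint law required by (\ref{e-b-def})), such that
\[
\max_{r\leq N}\;\norm{\sum_{j<r}\bra{K^{(j)}-\lambda^{(j)}}}_{L^p} \;\leq\; C_p \frac{\log N}{N}.
\]

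For this coupling I would invoke Davie's multidimensional KMT-type construction \cite{davie2014kmt} together with his Wasserstein estimates for polynomial perturbations of Gaussian measures \cite{davie2015poly}.  Each $K^{(j)}\in\RR^{d(d-1)/2}$ is a quadratic polynomial in a finite collection of independent standard Gaussians (the time/chaos coefficients of the Brownian bridges $B_1,\dots,B_d$), with mean zero and covariance $(12N^2)^{-1}I$ matching $\lambda^{(j)}$.  After rescaling by $N$, the variables $NK^{(j)}$ are unit-variance polynomial perturbations of standard Gaussians falling into the class treated by Davie, and the sharp KMT logarithm gives $\norm{\max_{r}|\sum_{j<r}N(K^{(j)}-\lambda^{(j)})|}_{L^p} \leq C_p \log N$; unscaling this immediately yields the theorem, since by the first paragraph $A^{(j)} - B^{(j)} = K^{(j)} - \lambda^{(j)}$.

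The chief technical obstacle is the independence requirement $\lambda^{(j)} \perp z^{(j)} = \zeta^{(j)}$: since $K^{(j)}$ is merely \emph{uncorrelated} with $\zeta^{(j)}$ and not independent from it, any coupling that targets closeness between $\lambda^{(j)}$ and $K^{(j)}$ generically induces unwanted dependence between $\lambda^{(j)}$ and $\zeta^{(j)}$, breaking (\ref{e-b-def}).  I would address this by running the KMT coupling conditionally on $\zeta^{(j)}$: the conditional law of $K^{(j)}$ given $\zeta^{(j)} = c$ is still a quadratic polynomial in Gaussians (an affine-in-$c$ shift of Gaussian-chaos residuals), so the polynomial-perturbation estimates of \cite{davie2015poly} apply at each fixed $c$ to produce $\lambda^{(j)} = \Phi\bra{K^{(j)},\zeta^{(j)},U^{(j)}}$ with an auxiliary independent uniform $U^{(j)}$, where $\Phi$ is designed so that the conditional law of $\lambda^{(j)}$ given $\zeta^{(j)}$ equals the target marginal Gaussian for \emph{every} value of $\zeta^{(j)}$---forcing $\lambda^{(j)}$ to be unconditionally Gaussian and independent of $\zeta^{(j)}$ as required.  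The delicate final step is to show that this conditional coupling retains the $\log N/N$ partial-sum bound uniformly in $c$, i.e.\ that the ``independence tax'' costs only a constant factor in the KMT bound once one averages over the tails of $\zeta^{(j)}$; this is where the fine polynomial-perturbation Wasserstein machinery of \cite{davie2015poly} is used essentially, and is the main place where genuine work is required.
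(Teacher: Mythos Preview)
Your reduction via $z^{(j)}:=\zeta^{(j)}$ is a legitimate first move—$\zeta^{(j)}$ is indeed Gaussian with the right law—but the independence constraint $\lambda^{(j)}\perp\zeta^{(j)}$ it creates is not closed by your proposed ``conditional KMT'', and the paper takes a structurally different route that sidesteps the issue entirely. The paper does \emph{not} identify $z^{(j)}$ with $\zeta^{(j)}$; instead it conditions on the Brownian increments $\Gg=\sigma(W^{(0)},\ldots,W^{(N-1)})$, writes $A^{(r)}=N^{-1}G_rX^{(r)}$ and $B^{(r)}=N^{-1}G_r\tilde X^{(r)}$ with $X^{(r)}$ the normalised pair $(\zeta^{(r)},K^{(r)})\in\RR^{d_1}$ and $\tilde X^{(r)}\sim N(0,I_{d_1})$, and runs a dyadic KMT-style construction on the projected sums $\sum_r G_rX^{(r)}$ versus $\sum_r G_r\tilde X^{(r)}$. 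The decisive structural point is that, conditional on $\Gg$, the $X^{(r)}$ are genuinely i.i.d.\ with a \emph{single} characteristic function $\psi$ (they depend only on the bridges, which are independent of $W$); all $r$-dependence is pushed into the deterministic matrices $G_r$, so each summand has characteristic function $\psi(2^{-n/2}G_r^t u)$. This is exactly what drives the Edgeworth-type expansion of Lemma~\ref{l-davie-lemma-4-new} and hence the whole recursive coupling.

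Your conditioning on $\zeta^{(j)}$ destroys this structure: conditionally on $(\zeta^{(0)},\ldots,\zeta^{(N-1)})$ the $K^{(j)}$ are independent but \emph{not} identically distributed—each conditional law depends nonlinearly on its own $\zeta^{(j)}$, and the variation is not a linear action on one fixed law, so there is no common $\psi$ to expand. The construction in \cite{davie2014kmt} that you invoke is precisely the $p<4$ case of the present theorem, not an external black box, and it relies essentially on the $\psi(G_r^t u)$ form; it does not transfer to a $\zeta$-parametrised family of laws, and redeveloping the CLT machinery with uniform-in-$\zeta$ control is at least as much work as the paper's proof, which your sketch leaves entirely open. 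Two smaller slips compound the gap: $K^{(j)}_{kl}$ is an iterated stochastic integral against Brownian bridges, hence lives in the second Wiener chaos of an \emph{infinite}-dimensional Gaussian and is not a quadratic in finitely many standard normals as you assert, so the finite-dimensional perturbation calculus of \cite{davie2015poly} does not apply to it in the way you suggest; and KMT is inherently a path-level dyadic coupling, so ``running it conditionally on $\zeta^{(j)}$'' for one $j$ at a time is not meaningful—any per-increment coupling yields only $O(N^{-1/2})$ for the maximal partial sum, not $O(N^{-1}\log N)$.
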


The fact that the random walks share the same Brownian increments is exploited in the paper \cite{flintCoupling} in order to reduce the complexity of the computations involved with the iterated Baker-Campbell-Hausdorff formula.

\subsection*{Outline of the paper}

We begin in the next section by presenting the Wasserstein metric from optimal transport theory. We then give a short overview of polynomial perturbations of Gaussian measures. Utilising the main results of the recent preprint \cite{davie2015poly}, we then prove an extension of \cite[Corollary 3]{davie2014kmt} from quartics to higher order polynomial perturbations. The final section of the paper then uses this extension together with a central limit theorem to generalise the original coupling proof of \cite[Theorem 1]{davie2014kmt} from a bound in the $2$nd Wasserstein metric $\Ww_2$ to $\Ww_p$ for general $p\in [1,\infty)$, thereby establishing Theorem \ref{t-coupling}. 

\section{Coupling and Wasserstein metrics}

We present a brief primer on the Wasserstein metric and couplings. For more details we refer to Villani's surveys in \cite{cedric2003topics,villani2008optimal}.
Since we only work with probability measures on Euclidean space throughout the paper, we restrict our study of the Wasserstein metric to the space of probability measures on $\RR^d$, denoted by $\mathcal{P}(\RR^d)$. The theory can be extended to general Polish spaces.

Given $p\in [1,\infty)$ let $\mathcal{P}_p(\RR^d) \subset \mathcal{P}(\RR^d)$ denote the subspace of measures with finite $p$th moment:
\[
\mathcal{P}_p(\RR^d) := \seq{\mu\in \mathcal{P}(\RR^d) : \int_{\RR^d} \abs{x}^p \, \mu(dx) < \infty}.
\]

\begin{Definition}
Fix $p\in [1,\infty)$. Given $\mu,\nu \in \mathcal{P}_p(\RR^d)$, define the set of transport plans of $\mu$ to $\nu$ by 
\[
\Mm(\mu,\nu) := \seq{ \Phi : \RR^d \to \RR^d \mid \Phi \textup{ measurable and } \Phi_*(\mu) = \nu}.
\]
Here, $\Phi_*(\mu)$ denotes the pushforward measure of $\mu$ under $\Phi$: 
\[
\Phi_*(\mu)(A) = \mu\bra{\Phi^{-1}\bra{A}} \textup{ for all Borel sets }A\subset \RR^d.
\] 
The $p$th Wasserstein distance on $\mathcal{P}_p(\RR^d)$ (using $\rho$) is defined as
\begin{equation}\label{e-def-was}
\Ww_p\bra{\mu,\nu} := \bra{ \inf_{\Phi \in \Mm(\mu,\nu)} \int_{\RR^d} \abs{x-\Phi(x)}_\infty^p\, \mu(dx)}^{1/p}.
\end{equation}
\end{Definition}

\begin{Remark}
Note the Wasserstein metric can be defined using any metric on $\RR^d$. We have chosen the uniform metric $\abs{x-y}_\infty := \max_{i=1,\ldots,d} \abs{x_i-y_i}$ because it possesses the nice property that the size of the vector  $z=(1,\ldots,1) \in \RR^{d}$ does not grow as $d\to\infty$.
\end{Remark}

Certainly we have $\Ww_q(\mu,\nu) \leq \Ww_p(\mu,\nu)$ for all $1\leq q \leq p <\infty$. This is analogous to the relation $L^q \subseteq L^p$ for Lebesgue $L^p$-spaces.

It can be shown that $\Ww_p$ is a genuine metric on $\mathcal{P}_p(\RR^d)$ (\cite[Theorem 7.3]{cedric2003topics}). Moreover, the infimum (\ref{e-def-was}) is actually a minimum under mild regularity conditions on the measures $\mu,\nu$ (for instance, a sufficient condition is that one of the measures is absolutely continuous with respect to Lebesgue measure).
The minimizing transport plan $\hat{\Phi}$ is unique but it is difficult to explicitly find $\hat{\Phi}$ because $\Mm (\mu,\nu)$ possesses no convexity or linear structure in general. However, in the case of $d=1$ classical optimal transport theory reduces the Wasserstein distance to an elegant formulation. While we will not use this result in this paper, its inclusion may be useful for general orientation. 

\begin{Proposition}\cite[Theorem 3.1.2]{rachev1998mass}
Let $F$ and $G$ be distribution functions on $\RR$ corresponding to probability measures $\mu$ and $\nu$ respectively. Suppose that $F$ is continuous with density $f$ and use $G^{-1}$ to denote the generalised inverse of $G$. Then for all $p\in [1,\infty)$,  
\[
\Ww_p\bra{\mu,\nu} = \bra{ \int_{\RR} \abs{G^{-1} \circ {F(x)} - x}_\infty^p f(x)\, dx}^{1/p}.
\]
In particular, for each $p$ the minimizing transport plan $\hat{\Phi}\in\Mm(\mu,\nu)$ is given by $\hat{\Phi}:=G^{-1}\circ F$.
\end{Proposition}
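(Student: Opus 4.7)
The plan is to show (a) that the candidate map $\hat\Phi := G^{-1}\circ F$ is a valid transport plan, and (b) that no other plan in $\Mm(\mu,\nu)$ can do better. For (a), continuity of $F$ implies that $U := F(X) \sim U(0,1)$ when $X \sim \mu$ (the probability integral transform), and the standard inverse-CDF identity gives $G^{-1}(U) \sim \nu$; hence $\hat\Phi_{*}(\mu)=\nu$ and $\hat\Phi\in\Mm(\mu,\nu)$. A change of variables $u = F(x)$ rewrites the candidate cost as
$$\int_{\RR} |G^{-1}(F(x)) - x|^p f(x)\,dx = \int_0^1 |G^{-1}(u) - F^{-1}(u)|^p\, du,$$
where $F^{-1}$ denotes the generalised inverse of $F$ (which agrees $\mu$-a.e.\ with the classical inverse, since $F$ is continuous with density).

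For (b), the cleanest route is to relax to the Kantorovich formulation. Enlarge $\Mm(\mu,\nu)$ to the set $\Pi(\mu,\nu)$ of couplings, i.e.\ Borel probability measures on $\RR^{2}$ with marginals $\mu,\nu$, observing that each $\Phi\in\Mm(\mu,\nu)$ induces the coupling $(\mathrm{id}\times\Phi)_{*}\mu\in\Pi(\mu,\nu)$ with identical cost. The key fact is the Hardy--Littlewood--P\'olya rearrangement inequality: for the one-dimensional convex cost $c(x,y)=|x-y|^{p}$, $p\geq 1$, the Kantorovich infimum is attained by the comonotone coupling $\pi^{*} := \mathrm{law}(F^{-1}(U), G^{-1}(U))$ with $U\sim U(0,1)$. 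A short self-contained proof proceeds via $c$-cyclical monotonicity: on the real line one checks that $c(x,y)+c(x',y') \leq c(x,y')+c(x',y)$ whenever $(x-x')(y-y')\geq 0$, so any coupling whose support is not monotone admits a local mass-preserving swap that strictly decreases the cost; iterating (or passing to the limit through discrete approximations) forces an optimiser to be supported on a monotone set, which together with the prescribed marginals uniquely identifies $\pi^{*}$.

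To close the argument, continuity of $F$ guarantees that $\pi^{*}$ is concentrated on the graph $\{(x,\hat\Phi(x)) : x\in\RR\}$, whence $\pi^{*} = (\mathrm{id}\times\hat\Phi)_{*}\mu$. Thus the Kantorovich infimum is attained by the Monge map $\hat\Phi$, and coincides with the Monge infimum in (\ref{e-def-was}), which proves both the formula for $\Ww_{p}(\mu,\nu)$ and the optimality (and uniqueness) of $\hat\Phi$. The main obstacle will be the rearrangement-inequality step: justifying monotone-rearrangement optimality requires either a discrete-approximation argument via permutation rearrangement inequalities, or the $c$-cyclical monotonicity characterisation of optimal transport. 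The hypothesis that $F$ is continuous is essential precisely because it is what promotes the optimal Kantorovich coupling back to a genuine Monge map; without it, $\pi^{*}$ may fail to be supported on the graph of any function, and the Monge formulation used in (\ref{e-def-was}) may even admit no minimiser.
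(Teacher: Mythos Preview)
Your argument is correct and follows the standard route to this classical one-dimensional result: the probability integral transform gives admissibility of $\hat\Phi$, and the rearrangement inequality (equivalently, $c$-cyclical monotonicity for the convex cost $|x-y|^p$) forces the comonotone coupling to be optimal, which under continuity of $F$ collapses to the Monge map $G^{-1}\circ F$. Your caveats about the rearrangement step are appropriate; a clean way to close it is to invoke the characterisation that any optimal plan is supported on a $c$-cyclically monotone set, and on $\RR$ with $c(x,y)=|x-y|^p$ this forces the support to be monotone.

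There is, however, nothing to compare against: the paper does not supply its own proof of this proposition. It is stated with a citation to \cite{rachev1998mass} and accompanied by the remark that ``we will not use this result in this paper, its inclusion may be useful for general orientation''. So your proposal stands on its own as a correct (and essentially the textbook) proof of a result the paper merely quotes.
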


An equivalent definition of
$\Ww_p$ is as follows. 

\begin{Proposition}\cite[Theorem 9.4]{villani2008optimal}\label{p-was-eq}
Fix $p\in [1,\infty)$ and let $\mu,\nu \in \mathcal{P}_p(\RR^d)$ with one of the measures being absolutely continuous with respect to Lebesgue measure. Then 
\[
\Ww_p\bra{\mu,\nu} = \inf \seq { \abs{X-Y}_{L^p} :  \Ll(X) = \mu, \Ll(Y) = \nu}. 
\]
Specifically, the infimum is taken over all distributions of the $\RR^d$-valued random variables $X$ and $Y$ with marginal distributions $\mu$ and $\nu$ respectively.
\end{Proposition}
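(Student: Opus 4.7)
The plan is to establish the two inequalities $\Ww_p(\mu,\nu) \geq \inf\seq{\abs{X-Y}_{L^p} : \Ll(X)=\mu,\, \Ll(Y)=\nu}$ and $\Ww_p(\mu,\nu) \leq \inf\seq{\abs{X-Y}_{L^p} : \Ll(X)=\mu,\, \Ll(Y)=\nu}$ separately. The first is essentially a trivial ``every transport map is a coupling'' observation, while the second is the substantive one and is precisely where the absolute continuity hypothesis enters.

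For the first inequality, I would fix $\Phi \in \Mm(\mu,\nu)$ and choose any probability space supporting a random variable $X$ with $\Ll(X) = \mu$. Setting $Y := \Phi(X)$ gives $\Ll(Y) = \Phi_*\mu = \nu$ by definition of the pushforward, so that $(X,Y)$ is an admissible coupling. The transfer principle for expectations yields
\[
\abs{X-Y}_{L^p}^p = \int_{\RR^d} \abs{x-\Phi(x)}_\infty^p\, \mu(dx),
\]
which is exactly the Monge cost of $\Phi$. Taking the infimum over $\Phi \in \Mm(\mu,\nu)$ on the right therefore establishes $\Ww_p(\mu,\nu) \geq \inf\seq{\abs{X-Y}_{L^p} : \Ll(X)=\mu, \Ll(Y)=\nu}$.

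For the reverse inequality, I would first recast the coupling infimum in Kantorovich form. Given any pair $(X,Y)$ with joint law $\pi$ on $\RR^d\times\RR^d$ having marginals $\mu$ and $\nu$, the identity $\abs{X-Y}_{L^p}^p = \int \abs{x-y}_\infty^p\, \pi(dx,dy)$ shows that $\inf \abs{X-Y}_{L^p}$ equals the Kantorovich infimum taken over all probability measures on $\RR^d \times \RR^d$ with marginals $\mu$ and $\nu$. It then suffices to demonstrate that the deterministic couplings of the form $(\textup{id},\Phi)_*\mu$, $\Phi \in \Mm(\mu,\nu)$, are cost-dense within the set of all such joint measures, whenever $\mu$ is absolutely continuous.

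The main obstacle is precisely this density statement, which is the content of Villani's Theorem 9.4 and relies on the Ambrosio--Pratelli theorem. I would argue it as follows: since $\mu$ is absolutely continuous with respect to Lebesgue measure, it is in particular atomless, so for any joint measure $\pi$ and any $\epsilon>0$ one can construct a Borel map $\Phi_\epsilon \in \Mm(\mu,\nu)$ with $\int \abs{x-\Phi_\epsilon(x)}_\infty^p\, \mu(dx) \leq \int \abs{x-y}_\infty^p\, \pi(dx,dy) + \epsilon$. The construction uses the disintegration $\pi(dx,dy)=\mu(dx)\,\pi_x(dy)$ together with a measurable partition of $\RR^d$ into small-diameter cells on which $\mu$ carries small mass: on each cell, atomlessness permits ``straightening'' the conditional kernels $\pi_x$ into a single Borel-measurable assignment $x \mapsto \Phi_\epsilon(x)$ via a standard measurable selection, incurring only an $\epsilon$-perturbation in the transport cost. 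Letting $\epsilon\downarrow 0$ and then taking the infimum over $\pi$ yields $\Ww_p(\mu,\nu)\leq \inf \abs{X-Y}_{L^p}$, which completes the equivalence. The atomlessness hypothesis cannot be dropped: otherwise a coupling that splits a Dirac mass of $\mu$ over distinct points of $\RR^d$ cannot be realized by any deterministic transport map, and a strict gap between the Monge and Kantorovich infima may appear.
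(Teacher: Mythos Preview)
The paper does not prove this proposition at all: it is stated with a citation to \cite[Theorem 9.4]{villani2008optimal} and no argument is given. So there is no ``paper's own proof'' to compare against; your sketch stands on its own.

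Your first inequality is fine and correctly identifies that every transport map $\Phi$ induces a coupling $(X,\Phi(X))$, whence the Monge infimum dominates the Kantorovich one. For the reverse inequality you correctly recognise that this is the substantive Monge--Kantorovich equivalence and that absolute continuity (indeed atomlessness) is where the hypothesis enters, and you point to the Ambrosio--Pratelli theorem as the key input. That attribution is accurate. However, the paragraph you offer as justification---disintegrating $\pi$, partitioning into small cells, and ``straightening the conditional kernels into a single Borel-measurable assignment via a standard measurable selection''---is really only a gesture at the construction, not a proof: the actual Ambrosio--Pratelli argument is delicate (it requires careful control of how mass is redistributed within cells so that the marginal $\nu$ is exactly recovered, not just approximately, and the measurable-selection step is nontrivial). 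If you intend to supply a self-contained proof rather than defer to Villani, this step would need substantially more detail; as written it is closer to a statement of what needs to be done than an argument that it can be done.
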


This proposition inspires the following definition.

\begin{Definition}[Coupling]
A pair of random variables $(X,Y)$ with the correct marginals $\mu$, $\nu$ is known as a coupling of the two probability measures. 
\end{Definition}

Thus the Wasserstein distance is given by the $L^p$-distance  between the optimal coupling. That is, if $\Ll(X)=\mu$, $\Ll(Y)=\nu$ then certainly $\Ww_p(\mu,\nu) \leq \abs{X-Y}_{L^p}$. 

\begin{Remark}
The topology induced by $\Ww_p$ on $\mathcal{P}_p(\RR^d)$ is slightly stronger than the weak topology; namely, convergence of a sequence $\{\mu_n\}_{n=1}^\infty \subset \mathcal{P}_p(\RR^d)$ to a measure $\mu \in \mathcal{P}_p(\RR^d)$ in $\Ww_p$ is equivalent to weak convergence plus a uniform bound on the $p$th moments of the measures $\{\mu_n\}_{n=1}^\infty$ (\cite[Theorem 6.9]{villani2008optimal}). In symbols, 
\[
\Ww_p\bra{\mu_n,\mu} \to 0 \iff \mu_n \to \mu \textup{ weakly and } \sup_n \int_{\RR^d} \abs{x}^p\, \mu_n(dx) < \infty.
\]
\end{Remark}

\begin{Remark}
The metric $\Ww_p$ originates from the {Monge-Kantorovich mass transportation problem}, first introduced by Monge in 1781 \cite{monge1781memoire}, then rediscovered many times in various forms since by Kantorovich \cite{kantorovich_russian}, L\'{e}vy among others. 
The first modern definition was given by the algebraic $K$-theorist Vaserstein in his sole paper in probability theory \cite{vaserstein}. Vaserstein, (anglicized as \textit{Wasserstein} from the Cyrillic alphabet), used the letter $\mathcal{K}$ for the Wasserstein metric in honour of Kantorovich's original contribution of \cite{kantorovich_russian}. (Amusingly, by total coincidence Kantorovich's work conversely used the letter $\mathcal{W}$). Throughout this paper we have used $\mathcal{W}$ and \textit{Wasserstein}, in agreement with the modern literature. For more historical details we refer to \cite{malrieu2003convergence,villani2008optimal} and \cite[\S12]{davie2014pathwise}. 
\end{Remark}

We conclude the section with a useful lemma \cite[Proposition 7.10]{cedric2003topics}.
Given two measures $\mu,\nu$ on $\RR^d$ with respective densities $f,g$ with respect to Lebesgue measure, we use the notation: $\abs{\mu-\nu}(x) := \abs{f(x)-g(x)}$. 

\begin{Lemma}\label{l-useless-lemma}
Let $\mu$ and $\nu$ be two probability measures on $\RR^d$. For $p\in [1,\infty)$, 
\[
\Ww_p(\mu,\nu) \leq 2^{1- 1/p}\seq{\int_{\RR^d} \abs{x}^p\, d\abs{\mu-\nu}(dx)}^{1/p}.
\]
\end{Lemma}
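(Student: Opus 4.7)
The plan is to exhibit an explicit coupling of $\mu$ and $\nu$ that keeps as much mass as possible on the diagonal, and then bound its $L^p$ cost directly. Assume first that $\mu$ and $\nu$ admit densities $f, g$ with respect to Lebesgue measure (the general case reduces to this by working with densities against the dominating measure $\mu+\nu$, or by a standard approximation argument). Decompose
\[
f = m + u, \qquad g = m + v, \qquad m := f \wedge g, \quad u := (f-g)_+, \quad v := (g-f)_+,
\]
and set $\alpha := \int u\, dx = \int v\, dx = \tfrac{1}{2}\int |f-g|\, dx$. If $\alpha = 0$ then $\mu = \nu$ and the inequality is trivial, so assume $\alpha > 0$.

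I would then introduce the ``maximal-diagonal'' transport plan $\pi$ on $\RR^d \times \RR^d$ defined by
\[
\pi(dx, dy) := m(x)\, \delta_x(dy)\, dx + \frac{1}{\alpha}\, u(x)\, v(y)\, dx\, dy.
\]
A direct marginal check confirms that $\pi$ is a probability measure with first marginal $\mu$ and second marginal $\nu$: integrating out $y$ from the first term yields $m(x)\, dx$, while integrating it out of the second yields $u(x)\, dx$, whose sum is precisely $f(x)\, dx$; symmetrically for the second marginal. By Proposition \ref{p-was-eq} (applicable because one of $\mu,\nu$ is absolutely continuous), $\Ww_p(\mu,\nu)^p$ is bounded above by the $p$-th cost of any such coupling, giving
\[
\Ww_p(\mu,\nu)^p \leq \int |x-y|_\infty^p\, d\pi(x,y).
\]

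Next I evaluate the right-hand side. The diagonal piece contributes nothing since $x=y$ there, leaving only the product piece. Applying the convexity inequality $|x-y|_\infty^p \leq 2^{p-1}(|x|_\infty^p + |y|_\infty^p)$ and using $\int u\, dx = \int v\, dy = \alpha$ to cancel the normalising $1/\alpha$, we obtain
\[
\frac{1}{\alpha} \iint |x-y|_\infty^p\, u(x)\, v(y)\, dx\, dy \leq 2^{p-1}\left( \int |x|_\infty^p u(x)\, dx + \int |y|_\infty^p v(y)\, dy \right) = 2^{p-1} \int |x|_\infty^p\, d|\mu - \nu|(x).
\]
Taking $p$-th roots recovers the stated bound with constant $2^{1-1/p}$.

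There is no genuine obstacle here; the proof is essentially bookkeeping. The only point requiring mild care is whether we may use a coupling rather than a Monge map in view of the paper's definition (\ref{e-def-was}): this is legitimate via Proposition \ref{p-was-eq} whenever one marginal is absolutely continuous, which covers the situation in which the right-hand side is even finite in a useful way. If desired, the abstract coupling $\pi$ can be replaced by its independent-coupling realisation on an auxiliary probability space, with $X \sim \mu$ set equal to $Y$ on the ``common mass'' event of probability $1-\alpha$ and drawn from $u/\alpha$ (resp.\ $v/\alpha$) on its complement, making the $L^p$ bound completely transparent.
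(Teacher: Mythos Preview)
Your argument is correct and is exactly the standard proof: construct the maximal-diagonal coupling via the decomposition $f=m+u$, $g=m+v$, bound the off-diagonal cost using $|x-y|_\infty^p\le 2^{p-1}(|x|_\infty^p+|y|_\infty^p)$, and cancel the normalising $\alpha$. The paper does not supply its own proof of this lemma; it simply cites \cite[Proposition 7.10]{cedric2003topics}, whose argument is precisely the one you have written.
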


As mentioned in \cite{davie2014pathwise}, this is quite a good bound for $p=1$ but less good for larger $p$. We will use this lemma in the subsequent sections in order to establish $\Ww_p$-estimates.

\subsection*{Notation}

Before proceeding further we establish some notation.\ 

\textbf{Constants.} Throughout the paper, $C,c,\ldots$ denote various deterministic constants (that may vary from line to line). Constants which are dependent upon a variable will have the dependency explicitly stated; for example, $C_p$ denotes a constant dependent on $p$. If a constant $C$ has many dependencies $\alpha,\beta,\gamma, \ldots,p,q$, we will simply write $C=C(\alpha,\beta,\gamma,\ldots,p,q)$. If we are working in the Euclidean space $\RR^d$ we will always ignore dependencies on the dimension $d$ (except for Section \ref{s-poly-perturb}, where we repeat results in \cite{davie2015poly}).\

\textbf{Gaussian measure.} Given a covariance matrix $\Sigma \in \RR^{d\times d}$, let $\phi_\Sigma$ denote the density function of $N(0,\Sigma)$:
\[
\phi_\Sigma(x) = \frac{1}{\bra{2\pi}^{d/2} \bra{\textup{det}\,{\Sigma}}^{1/2}} \exp\bra{-\frac{1}{2}x^t \Sigma^{-1} x}. 
\]
In the case of $\Sigma=I_d$ (the identity matrix) we simply write $\phi(x)=\phi_{I_d}(x)$. 
Given an arbitrary probability measure $\mu$ on $\RR^d$, we will commit a slight abuse of notation by using $\Ww_p(\mu,\phi_\Sigma)$ to denote the Wasserstein distance $\Ww_p(\mu, \Ll\bra{Z})$ where $Z\sim N(0,\Sigma)$.\ 

\textbf{Polynomial spaces.} Let $P$ denote the space of all real-valued polynomials on $\RR^d$ (which we will write as $P(\RR^d)$ when we want to specify the dimension $d$) and define the subspace 
\[
P_\Sigma := \seq{p\in P : \int_{\RR^d} p(x)\phi_\Sigma(x)\, dx = 0}.
\]
Let $P^d$ denote the space of $\RR^d$-valued polynomial functions on $\RR^d$. To be precise, $p\in P^d$ means $p(x)=(p^1(x), \ldots,p^d(x))$ for some $p^j \in P$. Given a polynomial $p^j$,  let $\textup{deg}(p^j)$ denote the highest degree of its terms. Similarly, for a polynomial function $p=(p^1,\ldots,p^d) \in P^d$, set $\textup{deg}(p):= \max_{j=1,\ldots,d} \textup{deg}(p^j)$. 
Lastly, given the dimension $d$, we set $d_1:=\frac{d}{2}(d+1)$ and $d_2:= \frac{d}{2}(d-1)$ to avoid cumbersome sub/superscript notation. 

\section{Polynomial perturbations of Gaussian distributions}\label{s-poly-perturb}

In this section we consider signed measures on $\RR^d$ with a density given by a polynomial perturbation of the standard Gaussian distribution $\phi(x) = (2\pi)^{-d/2}e^{-\abs{x}^2 / 2}$. Our main aim is to prove that if such a signed measure is close to a probability measure $\mu$ (in the form of an estimate similar to that of Lemma 
\ref{l-useless-lemma}), then we can expect the distance $\Ww_p(\mu,\phi)$ to be bounded by the magnitude of the perturbation. This is the content of the following proposition. 

\begin{Proposition}\label{p-poly-perturb-bound}
Fix $n\in\NN$ and $\varepsilon \in (0,1)$. Suppose $\{S_j\}_{j=1}^n \subset P$ is a sequence of polynomials with the absolute values of its coefficients bounded by a universal constant $M>0$ and set $s:= \max_{j=1,\ldots,n} \textup{deg}(S_j)$.  Fix some integer $1\leq n_0\leq n$ and let $\nu_{\varepsilon,n}$ denote the signed measure on $\RR^d$ with density 
\[
\phi(y)\bra{1+\sum_{j=n_0}^n \varepsilon^j S_j(y)}. 
\]
If $\mu$ is a probability measure on $\RR^d$ such that 
\begin{equation}\label{e-hyp-estimate}
\int_{\RR^d} \bra{1+\abs{y}}^p d\abs{\mu - \nu_{\varepsilon,n}}(y)\, \leq \delta, 
\end{equation}
then for all $p\in [1,\infty)$ we have
\[
\Ww_p\bra{\mu, \phi} \leq C_{d,M,n,p,s}\bra{\varepsilon^{n_0} + {\delta}^{1/p} +  \varepsilon^{\frac{n+1}{p}}}. 
\]
\end{Proposition}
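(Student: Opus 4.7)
The plan is to chain the Wasserstein triangle inequality through an auxiliary probability measure $\tilde\nu$ whose density is itself a polynomial-perturbation of $\phi$ that matches the density of $\nu_{\varepsilon,n}$ up to order $\varepsilon^{n+1}$. The principal obstacle is that $\nu_{\varepsilon,n}$ is genuinely signed: its density dips below zero in the tails (where polynomials eventually dominate the Gaussian factor) and its total mass need not equal one, so $\Ww_p(\nu_{\varepsilon,n},\phi)$ is not even defined and $\nu_{\varepsilon,n}$ cannot serve as the intermediate measure directly.

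First I would construct $\tilde\nu$ by appending a correction at order $\varepsilon^{n+1}$ (and, if needed, at higher orders), so that $\tilde\nu$ has density of the form $\phi(y)\bigl(1+\sum_{j=n_0}^{n}\varepsilon^j S_j(y)+\varepsilon^{n+1}R(y)\bigr)$, where $R$ is a polynomial whose coefficients remain bounded in terms of $M,s,n,d$. The point is that (i) the leading perturbation order of $\tilde\nu$ is still $\varepsilon^{n_0}$ and the coefficients obey the hypotheses of the Wasserstein estimates proved earlier in Section~\ref{s-poly-perturb}, and (ii) the discrepancy satisfies
\[
\int_{\RR^d}(1+|y|)^p\, d|\nu_{\varepsilon,n}-\tilde\nu|(y)\;\leq\; C_{d,M,n,p,s}\,\varepsilon^{n+1},
\]
which uses Gaussian tail estimates to absorb the region where $\nu_{\varepsilon,n}$ is negative (a set of super-polynomially small Gaussian mass, since it requires the polynomial $\sum_{j\ge n_0}\varepsilon^j S_j$ to exceed $1$ in absolute value).

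Second I would apply the higher-order polynomial-perturbation Wasserstein bound established in Section~\ref{s-poly-perturb} (the extension of \cite[Corollary~3]{davie2014kmt} to polynomials of arbitrary degree via the techniques of \cite{davie2015poly}) to the probability measure $\tilde\nu$, yielding $\Ww_p(\tilde\nu,\phi)\leq C_{d,M,n,p,s}\,\varepsilon^{n_0}$. This is the source of the $\varepsilon^{n_0}$ term, and it is strictly sharper than the $\varepsilon^{n_0/p}$ one would obtain by applying Lemma~\ref{l-useless-lemma} naively to $|\nu_{\varepsilon,n}-\phi|$; securing the non-trivial exponent $n_0$ (rather than $n_0/p$) is the whole reason the intermediate measure $\tilde\nu$ is needed.

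Third I would estimate $\Ww_p(\mu,\tilde\nu)$ via Lemma~\ref{l-useless-lemma} and the triangle inequality for total-variation of signed measures:
\[
\int (1+|y|)^p\, d|\mu-\tilde\nu|(y)\;\leq\; \int (1+|y|)^p\, d|\mu-\nu_{\varepsilon,n}|(y)+\int (1+|y|)^p\, d|\nu_{\varepsilon,n}-\tilde\nu|(y)\;\leq\;\delta+C\varepsilon^{n+1},
\]
whence $\Ww_p(\mu,\tilde\nu)\leq C\bigl(\delta^{1/p}+\varepsilon^{(n+1)/p}\bigr)$. Combining with $\Ww_p(\mu,\phi)\leq\Ww_p(\mu,\tilde\nu)+\Ww_p(\tilde\nu,\phi)$ yields the claimed bound. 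The technically delicate point is the construction in the first step and verifying that $\tilde\nu$ satisfies the hypotheses (uniformly bounded coefficients, correct leading order, total mass one) of the polynomial-perturbation bound invoked in the second step; once that bookkeeping is in place, the rest of the argument is a clean triangle-inequality assembly.
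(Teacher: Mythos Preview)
Your overall architecture---triangle inequality through an intermediate probability measure, with one leg contributing $\varepsilon^{n_0}$ and the other $\delta^{1/p}+\varepsilon^{(n+1)/p}$---matches the paper exactly. But your implementation of the two nontrivial steps has a genuine gap.

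First, your step 2 is circular. The ``higher-order polynomial-perturbation Wasserstein bound established in Section~\ref{s-poly-perturb} (the extension of \cite[Corollary~3]{davie2014kmt})'' \emph{is} Proposition~\ref{p-poly-perturb-bound}, the very statement you are proving. There is no separate, already-proved result in that section giving $\Ww_p(\tilde\nu,\phi)\le C\varepsilon^{n_0}$ for a probability measure $\tilde\nu$ with polynomial-perturbed Gaussian density; the only other result there, Proposition~\ref{p-davie-poly-1}, is a weighted total-variation estimate, not a Wasserstein bound.

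Second, your step 1 cannot be carried out as written: no density of the form $\phi(y)\bigl(1+\text{(nonconstant polynomial)}\bigr)$ is globally nonnegative, so adding further polynomial corrections $\varepsilon^{n+1}R(y)$ will never produce a genuine probability measure $\tilde\nu$ of that shape.

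The paper's resolution of both issues is to take as intermediate measure the \emph{pushforward} $\mu_\varepsilon=\mathrm{Law}\bigl(\rho_\varepsilon(Z)\bigr)$, where $Z\sim N(0,I)$ and $\rho_\varepsilon(x)=x+\sum_{j=n_0}^n\varepsilon^j p_j(x)$ with $(p_1,\dots,p_n)=\bigl(\mathcal S_I^{(n)}\bigr)^{-1}(S_1,\dots,S_n)$ (after first reducing to $S_j\in P_I$). This $\mu_\varepsilon$ is automatically a probability measure, and the bound $\Ww_p(\mu_\varepsilon,\phi)\le C\varepsilon^{n_0}$ comes for free from the explicit coupling $(Z,\rho_\varepsilon(Z))$:
\[
\Ww_p(\mu_\varepsilon,\phi)\le\bigl|Z-\rho_\varepsilon(Z)\bigr|_{L^p}=\Bigl|\sum_{j=n_0}^n\varepsilon^j p_j(Z)\Bigr|_{L^p}\le C\varepsilon^{n_0}.
\]
The closeness of $\mu_\varepsilon$ to $\nu_{\varepsilon,n}$ in weighted total variation (your desired $\varepsilon^{n+1}$ discrepancy) is then exactly the content of Proposition~\ref{p-davie-poly-1}, and from there your step 3 goes through verbatim with $\mu_\varepsilon$ in place of $\tilde\nu$. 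So the missing idea is the pushforward construction and the trivial coupling it furnishes; once you have that, the rest of your outline is correct.
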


The proposition is a strengthened version of \cite[Corollary 3]{davie2014kmt}. Instead of dealing with only quartic perturbations, Proposition \ref{p-poly-perturb-bound} can handle polynomials of arbitrarily high order. 
The proof was suggested to the author by Professor Davie in private communications and the argument relies upon the results in his preprint \cite{davie2015poly}. Thus before presenting its proof we summarise the work of the latter paper.

\begin{Remark}
To be precise, the original polynomial perturbation result of Davie actually considered quartic perturbations of $\phi_\Sigma$ for some arbitrary covariance matrix $\Sigma \in \RR^{d\times d}$. One may ask whether Proposition \ref{p-poly-perturb-bound} can also be generalised to the case of arbitrary $\phi_\Sigma$ rather than just $\Sigma=I_d$. The author conjectures that this is possible since the technical tools from \cite{davie2015poly} remain valid. However, the precise dependencies of the norms $\norm{\Sigma}$ and $\norm{\Sigma}^{-1}$ in the bound of $\Ww_p(\mu,\phi_\Sigma)$ are complicated and it is a non-trivial task to keep track of these quantities throughout the proof. 
Fortunately we only need the case of $\Sigma=I_d$ for the coupling arguments of this paper. 
\end{Remark}  

We now summarise the main contributions of \cite{davie2015poly} and begin by characterising the subspace $P_\Sigma \subset P$ as follows. 

\begin{Lemma}
Define the linear mapping $\Ll_\Sigma : P^d \to P$ by $\Ll_\Sigma p(x) = \nabla \cdot p(x) - x^t\Sigma^{-1} p(x)$. 
The space $P_\Sigma$ is precisely the range of $\Ll_\Sigma$. Moreover, any element  $p\in P_\Sigma$ can be expressed as $\Ll_\Sigma \nabla u$ for some $u\in P$ (and the $u$ is unique up to an additive constant).
\end{Lemma}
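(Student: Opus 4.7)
The plan is to first verify $\Ll_\Sigma(P^d) \subseteq P_\Sigma$ by integration by parts, and then establish surjectivity together with the ``moreover'' statement by a rank-nullity argument applied to the Ornstein--Uhlenbeck generator.

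First I would record the identity
\[
\Ll_\Sigma p(x)\, \phi_\Sigma(x) = \nabla \cdot \bra{p(x)\phi_\Sigma(x)},
\]
which follows immediately from $\nabla \phi_\Sigma(x) = -\Sigma^{-1} x\, \phi_\Sigma(x)$ together with the product rule. Integrating over $\RR^d$ and using the rapid decay of $p\phi_\Sigma$ at infinity then gives $\int_{\RR^d} \Ll_\Sigma p(x)\, \phi_\Sigma(x)\, dx = 0$, so $\Ll_\Sigma(P^d) \subseteq P_\Sigma$.

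For the reverse containment it suffices to prove the stronger ``moreover'' statement, so I would focus on the scalar composition
\[
Lu := \Ll_\Sigma \nabla u = \Delta u - x^t \Sigma^{-1} \nabla u,
\]
recognisable as the Ornstein--Uhlenbeck generator associated to $N(0,\Sigma)$. Three facts drive the argument: (i) $L$ does not raise polynomial degree, so for each $n\in\NN$ it restricts to an endomorphism of the finite-dimensional space $P_n \subset P$ of polynomials of degree $\leq n$; (ii) a second integration by parts gives $\int (Lu)\, v\, \phi_\Sigma\, dx = -\int \nabla u \cdot \nabla v\, \phi_\Sigma\, dx$, so $L$ is symmetric with respect to the Gaussian inner product and $-\int (Lu)u\, \phi_\Sigma\, dx = \int \abs{\nabla u}^2 \phi_\Sigma\, dx$; (iii) consequently $\ker L \cap P$ consists of exactly the constants.

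The final step is a rank-nullity count on $P_n$. By the first part, $L(P_n) \subseteq P_n \cap P_\Sigma$, while $\dim(P_n \cap P_\Sigma) = \dim P_n - 1$ because $p \mapsto \int p\, \phi_\Sigma\, dx$ is a nonzero linear functional on $P_n$. Combined with $\dim \ker L|_{P_n} = 1$ from (iii), the rank-nullity theorem forces the equality $L(P_n) = P_n \cap P_\Sigma$. Taking a union over $n$ yields $L(P) = P_\Sigma$, proving both that every $q \in P_\Sigma$ has the form $q = \Ll_\Sigma \nabla u$ and that such a $u \in P$ is unique modulo additive constants. The only delicate point I foresee is property (i): one must directly verify that both $\Delta u$ and $x^t \Sigma^{-1} \nabla u$ have degree at most $\textup{deg}(u)$, which is precisely what makes the finite-dimensional rank-nullity argument viable.
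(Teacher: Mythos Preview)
Your argument is correct. The divergence identity $\Ll_\Sigma p\cdot\phi_\Sigma = \nabla\cdot(p\,\phi_\Sigma)$ handles the inclusion $\Ll_\Sigma(P^d)\subseteq P_\Sigma$ exactly as in the paper (which simply cites the divergence theorem for this direction), and your rank--nullity computation on $P_n$ is sound: $L$ preserves $P_n$ because $\Delta u$ drops degree by two while $x^t\Sigma^{-1}\nabla u$ preserves it, the kernel identification $\ker L=\{\textup{constants}\}$ follows cleanly from the Dirichlet-form identity $-\int(Lu)u\,\phi_\Sigma = \int\abs{\nabla u}^2\phi_\Sigma$, and the codimension-one match with $P_n\cap P_\Sigma$ then forces surjectivity.

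The paper's own proof is only a pointer: it states the result follows ``by induction and the divergence theorem'' and refers to \cite[Lemma 1]{davie2014pathwise}. That inductive route presumably constructs a preimage degree by degree---solving for the top-degree part of $u$ so that $Lu$ matches the top-degree part of a given $q\in P_\Sigma$, then recursing on the lower-degree remainder. Your rank--nullity argument trades this explicit construction for a dimension count: it is shorter and gives uniqueness immediately, at the cost of being non-constructive. Both approaches rest on the same key observation that $L$ does not raise polynomial degree, so the difference is one of packaging rather than substance.
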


The proof is by induction and the divergence theorem (see \cite[Lemma 1]{davie2014pathwise}). 
One consequence of the lemma is that if we define $P^d_G$ to be the set of $p\in P^d$ of the form $p=\nabla u$ with $u\in P$, we have that $\Ll_\Sigma$ is bijective from $P^d_G\to P_\Sigma$ and we can define the inverse linear mapping $\Ll_\Sigma^{-1} : P_\Sigma \to P^d_G$. 

Next, suppose we have a sequence of polynomial functions $\{p_j\}_{j=1}^n \subset P^d_G$.
For each $\varepsilon>0$, we define the polynomial perturbation mapping $\rho_{\varepsilon} :\RR^d\to\RR^d$ by 
\begin{equation}\label{e-pp3}
\rho_{\varepsilon}(x)= x+\sum^n_{j=1} \varepsilon^j p_j(x). 
\end{equation}
We are interested in the distribution of $\rho_{\varepsilon}(Z)$ for small $\varepsilon$, where $Z\sim N(0,\Sigma)$. If we assume that $\rho_{\varepsilon}$ is bijective then this distribution has a density given by
\begin{equation*}\label{e-pp1}
f_{\varepsilon}(y) = \textup{det}\bra{D \rho^{-1}_{\varepsilon}(y)}\phi_\Sigma\bra{\rho_{\varepsilon}^{-1}(y)}. 
\end{equation*}
As Davie notes in the introduction of \cite{davie2015poly}, in general bijectivity will only hold on some bounded region of $\RR^d$ (which will be large if $\varepsilon$ is small). It actually turns out that bijectivity is a sufficient condition for (\ref{e-pp3}) to imply the following asymptotic expansion of the density:
\begin{equation}\label{e-pp2}
f_{\varepsilon}(y) = \phi_\Sigma(y)\bra{1+\sum^\infty_{j=1} \varepsilon^j S_j(y)},
\end{equation}
for some sequence of polynomials $\{S_j\}_{j=1}^\infty \subset P_\Sigma$. 

In fact, we can explicitly construct the sequence $\{S_j\}_{j=1}^\infty$ from the polynomial functions $\{p_j\}_{j=1}^n$ via a bijection. We introduce the notation $\mathcal{P}$ for the set of all sequences $(u_1,u_2,\ldots,)$ with $u_j \in \mathcal{P}$ with similar definitions for $\mathcal{P}^d, \mathcal{P}_\Sigma, \mathcal{P}_G^d$. 
By using the inverse linear mapping $\Ll_\Sigma^{-1} : P_\Sigma \to P^d_G$, Davie inductively constructs a bijection $\mathcal{S}_\Sigma : \mathcal{P}^d _G\to \mathcal{P}_\Sigma$ such that
\[
\mathcal{S}_\Sigma(p_1,\ldots,p_n, 0, 0,\ldots) = (S_1,S_2,\ldots).  
\]
Since each $S_k$ is dependent only on $p_1,\ldots,p_k$, this can be rewritten in the more succinct form of the truncated mapping: $\mathcal{S}^{(n)}_\Sigma(p_1,\ldots,p_n) = (S_1,\ldots,S_n)$. 
For the explicit definition of $\mathcal{S}_\Sigma$ we refer to \cite[Lemma 2]{davie2015poly}. 
We are now in a position to state the main result of \cite{davie2015poly} in a simplified form for the special case of $\Sigma=I_d$. 

\begin{Proposition}\cite[Proposition 1]{davie2015poly}\label{p-davie-poly-1}
Let $\{S_j\}_{j=1}^n \subset P_I$ and define the corresponding sequence $\{p_j\}_{j=1}^n \subset P^d_G$ via the truncated bijection 
\[
\mathcal{S}^{(n)}_I(p_1,\ldots,p_n)=(S_1,\ldots, S_n).
\]
Set $R$ to be an upper bound on the absolute values of the coefficients of $p_1,\ldots,p_n$. 
Using these sequences, define the mapping $\rho_{\varepsilon}$ as in (\ref{e-pp3}) and let $\nu_{\varepsilon,n}$ be the signed measure on $\RR^d$ with density (\ref{e-pp2}). Finally, let $\mu_\varepsilon$ be the law of $\rho_\varepsilon(Z)$ where $Z\sim N(0,I)$. Then for all $p\in [1,\infty)$ we have 
\[
\int_{\RR^d} \bra{1+\abs{y}}^p d\abs{\mu_{\varepsilon}-\nu_{\varepsilon,n}}(y) \leq C_p \varepsilon^{n+1}, 
\]
for some constant $C_p>0$ depending only on $d,n,R$ and the maximum degree of $p_1,\ldots,p_n$. 
\end{Proposition}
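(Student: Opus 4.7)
My plan is to reduce Proposition~\ref{p-poly-perturb-bound} to Proposition~\ref{p-davie-poly-1} by building a polynomial perturbation map $\rho_\varepsilon$ whose pushforward $\mu_\varepsilon := \textup{law}(\rho_\varepsilon(Z))$ with $Z\sim N(0,I_d)$ interpolates between $\mu$ and $\phi$, then controlling the three resulting triangle-inequality legs separately.

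First I center the perturbation: for each $j\in\{n_0,\ldots,n\}$ write $c_j := \int_{\RR^d} S_j(y)\phi(y)\,dy$ and $T_j := S_j - c_j \in P_I$. Since the coefficients of $S_j$ are bounded by $M$ and $\deg(S_j)\leq s$, both $|c_j|$ and the coefficients of $T_j$ are bounded in terms of $d,M,s$. Testing the hypothesis~\eqref{e-hyp-estimate} against the constant function $1$ (and using $\mu(\RR^d)=1$ together with $(1+|y|)^p\geq 1$) yields $\bigl|\sum_{j=n_0}^n \varepsilon^j c_j\bigr|\leq \delta$.

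Next I apply Davie's bijection: let $(p_1,\ldots,p_n)\in (P^d_G)^n$ be the unique sequence with $\mathcal{S}_I^{(n)}(p_1,\ldots,p_n) = (0,\ldots,0,T_{n_0},\ldots,T_n)$. The inductive nature of the construction in \cite[Lemma~2]{davie2015poly} forces $p_1=\cdots=p_{n_0-1}=0$, and a routine induction shows the surviving $p_j$ have degrees and coefficient magnitudes bounded by some constant $R=R(d,M,n,s)$. Form $\rho_\varepsilon(x) = x+\sum_{j=n_0}^n \varepsilon^j p_j(x)$, and let $\nu'_{\varepsilon,n}$ denote the signed measure with density $\phi(y)\bigl(1+\sum_{j=n_0}^n \varepsilon^j T_j(y)\bigr)$. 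Proposition~\ref{p-davie-poly-1} then delivers
\[
\int_{\RR^d}(1+|y|)^p\,d|\mu_\varepsilon-\nu'_{\varepsilon,n}|(y)\leq C_p\,\varepsilon^{n+1}.
\]
Moreover $\nu_{\varepsilon,n}-\nu'_{\varepsilon,n}$ has density $\phi(y)\sum_{j=n_0}^n \varepsilon^j c_j$, a Gaussian times a constant of absolute value at most $\delta$, so $\int(1+|y|)^p\,d|\nu_{\varepsilon,n}-\nu'_{\varepsilon,n}|(y)\leq C_p\,\delta$.

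Combining these two bounds with the hypothesis~\eqref{e-hyp-estimate} via the triangle inequality in total variation gives $\int(1+|y|)^p\,d|\mu-\mu_\varepsilon|(y)\leq C_p(\delta+\varepsilon^{n+1})$. Applying Lemma~\ref{l-useless-lemma} (using $|x|^p\leq (1+|x|)^p$) together with the elementary bound $(a+b)^{1/p}\leq a^{1/p}+b^{1/p}$ for $a,b\geq 0$ then produces $\Ww_p(\mu,\mu_\varepsilon)\leq C_p(\delta^{1/p}+\varepsilon^{(n+1)/p})$. For the remaining leg, coupling $\mu_\varepsilon$ and $\phi$ through the pair $(\rho_\varepsilon(Z),Z)$ gives
\[
\Ww_p(\mu_\varepsilon,\phi)\leq \Bigl\|\sum_{j=n_0}^n \varepsilon^j p_j(Z)\Bigr\|_{L^p} \leq C_p\,\varepsilon^{n_0},
\]
since $\varepsilon<1$ allows $\sum_{j=n_0}^n \varepsilon^j\leq n\,\varepsilon^{n_0}$ and the Gaussian moments of each $p_j(Z)$ can be absorbed into $C_p$. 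A final triangle inequality $\Ww_p(\mu,\phi)\leq \Ww_p(\mu,\mu_\varepsilon)+\Ww_p(\mu_\varepsilon,\phi)$ then delivers the claimed bound.

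The main obstacle will be the bookkeeping hidden inside $\mathcal{S}_I^{-1}$: one must verify that the coefficient bound on the $S_j$ propagates through to a coefficient bound on the $p_j$ (this should follow from the inductive formula $p_j = \Ll_I^{-1}(T_j - \text{lower-order terms})$ and boundedness of $\Ll_I^{-1}$ on finite-dimensional polynomial subspaces) and that the bijectivity hypothesis for $\rho_\varepsilon$ implicit in Proposition~\ref{p-davie-poly-1} is met uniformly across $\varepsilon\in(0,1)$. For $\varepsilon$ bounded away from zero the conclusion is vacuous up to a constant --- the hypothesis~\eqref{e-hyp-estimate} already bounds the $p$th moment of $\mu$ and hence $\Ww_p(\mu,\phi)$ itself --- so one may freely restrict to the small-$\varepsilon$ regime where Davie's proposition applies cleanly.
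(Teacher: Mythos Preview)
Your proposal does not prove the statement in question. The target is Proposition~\ref{p-davie-poly-1}, which the paper does not prove at all --- it is quoted verbatim from \cite[Proposition~1]{davie2015poly} and used as a black box. Your write-up invokes Proposition~\ref{p-davie-poly-1} as an ingredient (``Proposition~\ref{p-davie-poly-1} then delivers \ldots'') and uses it to deduce Proposition~\ref{p-poly-perturb-bound}. That is the wrong direction: you have written a proof of Proposition~\ref{p-poly-perturb-bound}, not of Proposition~\ref{p-davie-poly-1}, and your argument is circular as a purported proof of the latter.

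If your proposal is read instead as an attempt at Proposition~\ref{p-poly-perturb-bound}, then it is correct and follows essentially the same route as the paper's own proof of that proposition: center the $S_j$ into $P_I$ at cost $O(\delta)$, invert $\mathcal{S}_I^{(n)}$ to obtain the $p_j$ (with $p_1=\cdots=p_{n_0-1}=0$ and controlled coefficients), apply Proposition~\ref{p-davie-poly-1} and the hypothesis to bound $\int(1+|y|)^p\,d|\mu-\mu_\varepsilon|$, feed this into Lemma~\ref{l-useless-lemma}, and close with the trivial coupling $(Z,\rho_\varepsilon(Z))$ for the $\Ww_p(\mu_\varepsilon,\phi)$ leg. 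The only cosmetic difference is that the paper absorbs the centering constants $\beta_j$ directly rather than naming a separate signed measure $\nu'_{\varepsilon,n}$.

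To actually prove Proposition~\ref{p-davie-poly-1} you would need to open up \cite{davie2015poly}: the argument there controls the pushforward density of $\rho_\varepsilon(Z)$ by analysing the Jacobian $\det(D\rho_\varepsilon^{-1})$ and the composition $\phi\circ\rho_\varepsilon^{-1}$ on a large ball where $\rho_\varepsilon$ is a diffeomorphism, expanding both in powers of $\varepsilon$ and identifying the terms up to order $n$ with the $S_j$ via the recursive definition of $\mathcal{S}_I$, while bounding the tail region by Gaussian decay. None of that analysis appears in your proposal.
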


Armed with the latter result, we are now able to present the proof of Proposition \ref{p-poly-perturb-bound}. 

\begin{proof}[Proof of Proposition \ref{p-poly-perturb-bound}]
Following the same technique in \cite[Corollary 2]{davie2014kmt}, we first show that we may assume that $S_j \in P_\Sigma$ for all $j\in\{n_0,\ldots,n\}$. Indeed, let
\[
\beta_j:= \varepsilon^j\int_{\RR^d} S_j(y) \phi(y)\, dy \textup{ and }  \beta:=\sum_{j=n_0}^n \beta_j.
\]
Then we have 
\begin{align*}
\abs{\beta} 
= \abs{\sum_{j=n_0}^n \varepsilon^j \int_{\RR^d} S_j(y) \phi(y)\, dy}
&= \abs{ \int_{\RR^d} \phi(y)\bra{1+\sum_{j=n_0}^n \varepsilon^j S_j(y)}\, dy - \int_{\RR^d} \mu_\varepsilon(y)\, dy}\\
&= \abs{ \int_{\RR^d} \bra{\nu_{\varepsilon,n}-\mu_\varepsilon}(y)\, dy}\\
&\leq \int_{\RR^d} d\abs{\nu_{\varepsilon,n}-\mu_\varepsilon}(y)
\leq \int_{\RR^d} \bra{1+\abs{y}}^p  d\abs{\nu_{\varepsilon,n}-\mu_\varepsilon}(y) 
\leq \delta.
\end{align*}
Next, define a new signed measure $\tilde{\nu}_{\varepsilon,n}$ on $\RR^d$ with density
\[
\phi(y)\bra{1+\sum_{j=n_0}^n \bra{\varepsilon^j S_j(y)-\beta_j}} = \phi(y)\bra{1+\sum_{j=n_0}^n \varepsilon^j S_j(y)}  - \beta\phi(y). 
\]
Consequently, 
\begin{align*}
\int_{\RR^d} \bra{1+\abs{y}}^p d\abs{\tilde{\nu}_{\varepsilon,n}-\mu_\varepsilon}(y)
&\leq \int_{\RR^d} \bra{1+\abs{y}}^p \seq{d\abs{\nu_{\varepsilon,n} - \mu_\varepsilon}(y) + \abs{\beta} \phi(y)\, dy}\\
& \leq \delta + \abs{\beta} \int_{\RR^d} \bra{1+\abs{y}}^p \phi(y)\, dy
\leq C_p\delta.
\end{align*}
Therefore, by replacing each $S_j$ by $S_j-\beta_j$ we may assume that $S_j \in P_\Sigma$.

Since we can assume each $S_{j} \in P_I$, we can apply the bijection $\mathcal{S}_I$ of \cite[Lemma 2]{davie2015poly} to find a corresponding sequence $\{p_j\}_{j=1}^n \subset P^d_G$ such that 
\[
\mathcal{S}_I^{(n)}(p_1,\ldots,p_n) = (S_1,\ldots,S_n). 
\] 
As stated in the proof of \cite[Corollary 3]{davie2014kmt}, if $g_0=\Ll_I p_0$ for $p_0 \in P^d$, $g_0 \in P$, then $\textup{deg}(p_0) \leq \textup{deg}(g_0)$ and the absolute values of the coefficients of $p_0$ are bounded by those of $g_0$ up to a universal multiplicative constant. 
Thus by the recursive construction of the map $\mathcal{S}_I$ using $\Ll_I^{-1}$ in \cite{davie2015poly}, we can bound the absolute values of the coefficients of $p_1,\ldots,p_n$ by some function of those of $S_1,\ldots,S_n$. Similarly, the degree of each $p_j$ can be bounded by a function of $s=\max_{j} \textup{deg}(S_j)$. A simple consequence is the bound $\abs{p_j(Z)}_{L^p} \leq C_{d,M,p,s}$ for each $j$, where $Z\sim N(0,I)$. Taking the trivial coupling of $\Ll(Z)$ and $\mu_\varepsilon$, specifically $(Z,\rho_\varepsilon(Z))$, this leads to the estimate
\begin{align}
\Ww_p\bra{\mu_{\varepsilon},\phi} 
\leq \abs{Z-\rho_{\varepsilon}(Z)}_{L^p} 
\leq \sum_{j=n_0}^n \varepsilon^j \abs{p_j\bra{Z}}_{L^p}
&\leq C_{d,M,p,s} \sum_{j=n_0}^n \varepsilon^{j}\notag\\
&= C_{d,M,n,p,s}\varepsilon^{n_0}.\label{e-triangle-2}
\end{align}

As before, define the polynomial perturbation $\rho_{\varepsilon}$ by (\ref{e-pp3}) and let $\mu_{\varepsilon}$ be the law of $\rho_{\varepsilon}(Z)$ for $Z\sim N(0,I)$. Then Proposition \ref{p-davie-poly-1} guarantees that for all $p\in [1,\infty)$, 
\[
\int_{\RR^d} \bra{1+\abs{y}}^p d\abs{\mu_{\varepsilon}-\nu_{\varepsilon,n}}(y) \leq C_{d,M,n,p,s} \varepsilon^{n+1}. 
\]
Combined with (\ref{e-hyp-estimate}), this estimate gives
\[
\int_{\RR^d} \bra{1+\abs{y}}^p d\abs{\mu-\mu_{\varepsilon}}(y) \leq {\delta}+ C_{d,M,n,p,s} \varepsilon^{n+1},
\]
and so Lemma \ref{l-useless-lemma} yields the Wasserstein estimate
\begin{equation}\label{e-triangle-1}
\Ww_p\bra{\mu,\mu_{\varepsilon}} \leq C_{d,M,n,p,s}\bra{{\delta} + \varepsilon^{n+1}}^{1/p} 
\leq C_{d,M,n,p,s} \bra{\delta^{1/p} + \varepsilon^{\frac{n+1}{p}}}.
\end{equation} 
Finally, combining (\ref{e-triangle-2}) and (\ref{e-triangle-1}) via the triangle inequality, we conclude that
\begin{align*}
\Ww_p\bra{\mu,\phi}
&\leq \Ww_p\bra{\mu,\mu_{\varepsilon}} + \Ww_p\bra{\mu_{\varepsilon},\phi} 
\leq C_{d,M,n,p,s}\bra{ \varepsilon^{n_0} + {\delta}^{1/p} +  \varepsilon^{\frac{n+1}{p}}}.
\end{align*}
The proof is complete. 
\end{proof}

\begin{Remark}
The proof of Proposition \ref{p-poly-perturb-bound} reveals the reason why we insist on bounding the quantity $\int_{\RR^d} \bra{1+\abs{y}}^p d\abs{\mu-\nu}(y)$ rather than the simpler integral 
\[
\int_{\RR^d} \abs{y}^p d\abs{\mu-\nu}(y). 
\]
At first glance the latter quantity is all that is needed for Lemma \ref{l-useless-lemma} to bound $\Ww_p(\mu,\nu)$. However, in order to generalise the proof to polynomials not contained in $P_\Sigma$ we use the simple inequality 
\[
\int_{\RR^d} d\abs{\nu_{n,\varepsilon}-\mu}(y) \leq \int_{\RR^d}\bra{1+\abs{y}}^p d\abs{\nu_{n,\varepsilon}-\mu}(y). 
\]
This bound does not remain true when we replace the right-hand side with $\int_{\RR^d} \abs{y}^p d\abs{\nu_{n,\varepsilon}-\mu}(y)$. For instance, consider the case when the supports of $\nu_{n,\varepsilon}$ and $\mu$ are contained within the open unit ball around the origin of $\RR^d$.  
\end{Remark}

\section{Main coupling theorem}

We restate the main coupling theorem of the paper. Then, after establishing some notation and performing a linear transformation, we show that it suffices to prove the simpler statement of Proposition \ref{p-coupling-y-z}. 

\begin{Theorem}
For every $p\in [1,\infty)$, there exists a constant $C_p>0$ such that the following holds: given $N\in\NN$, we can construct a coupling between the i.i.d.~sequence $\{A^{(j)}\}_{j=0}^{N-1}$ and i.i.d.~$\{B^{(j)}\}_{j=0}^{N-1}$ random variables, with each pair $(A^{(j)}, B^{(j)})$ defined by (\ref{l-area-decomposition}) and (\ref{e-b-def}) using a common Brownian increment $W^{(j)} \sim N(0,N^{-1}I_d)$, such that 
\[
\max_{r=1,\ldots,N} \abs{\norm{ \sum^{r-1}_{j=0} \bra{A^{(j)}-B^{(j)}}}_{\RR^{\frac{d}{2}(d-1)}}}_{L^p} \leq C_p \frac{\log N}{N}. 
\]
\end{Theorem}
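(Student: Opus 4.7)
The plan is to follow the dyadic KMT-type coupling scheme of Davie from \cite{davie2014kmt,davie2014pathwise}, replacing the quartic Wasserstein estimate used there with the general polynomial-perturbation bound of Proposition \ref{p-poly-perturb-bound}. This substitution is exactly what upgrades the earlier $\Ww_2$-bound to a $\Ww_p$-bound for arbitrary $p\in[1,\infty)$, because for large $p$ the $\Ww_p$-control of the law of a polynomial-perturbed Gaussian requires treating polynomial terms of arbitrarily high order. To begin, since each pair $(A^{(j)},B^{(j)})$ uses the same Brownian increment $W^{(j)}$, we have
\[
A^{(j)}_{kl}-B^{(j)}_{kl} = \bra{\zeta^{(j)}_k - z^{(j)}_k}W^{(j)}_l - \bra{\zeta^{(j)}_l - z^{(j)}_l}W^{(j)}_k + \bra{K^{(j)}_{kl}-\lambda^{(j)}_{kl}}.
\]
Writing $X^{(j)} := (\zeta^{(j)}, K^{(j)}) \in \RR^{d_1}$ and $Y^{(j)} := (z^{(j)}, \lambda^{(j)}) \in \RR^{d_1}$, the theorem reduces to producing a coupling of the i.i.d.~sequences $\{X^{(j)}\}$ (non-Gaussian) and $\{Y^{(j)}\}$ (Gaussian with matching mean and covariance) satisfying the analogous $O(\log N / N)$ bound on $\max_r \abs{\norm{\sum_{j<r}(X^{(j)}-Y^{(j)})}_{\RR^{d_1}}}_{L^p}$. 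Once that is done, H\"older's inequality combined with $\norm{W^{(j)}}_{L^{q}} = O(N^{-1/2})$ absorbs the $W$-factors and recovers the target bound for $A-B$.

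Assume for simplicity $N=2^n$ and partition $\{0,\ldots,N-1\}$ into a binary tree of dyadic blocks. Write $X^{(k,i)}$, $Y^{(k,i)}$ for the block-sums over the $i$-th block of size $m:=2^k$ at level $k$. Construct the coupling top-down: first couple $X^{(n,1)}$ and $Y^{(n,1)}$ optimally in $\Ww_p$, then recursively at each level $k-1$ couple the left-half block-sum $X^{(k-1,2i-1)}$ with $Y^{(k-1,2i-1)}$ conditionally on the already-coupled level-$k$ sums, with the right-half increment determined by subtraction. Since the $Y^{(j)}$ are Gaussian, the conditional law of $Y^{(k-1,2i-1)}$ given $Y^{(k,i)}$ is explicitly Gaussian. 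For the $X$-side, an Edgeworth-type expansion (produced by the Fourier-inversion and smoothing arguments of \cite[\S7--8]{davie2014pathwise} applied to the density of $X^{(j)}$) gives that the standardised block-sum density takes the form $\phi(y)\bra{1+\sum_{j=1}^{n_p} m^{-j/2} S_j(y)}$ plus a $(1+\abs{y})^p$-weighted $L^1$-remainder of order $m^{-(n_p+1)/2}$, where the polynomials $S_j \in P_I$ have universally bounded degree and coefficients.

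Proposition \ref{p-poly-perturb-bound} now applies with $\varepsilon=m^{-1/2}$, $n_0=1$, and $n=n_p$ chosen large (depending on $p$) so that the term $\varepsilon^{(n+1)/p}$ is dominated by $\varepsilon^{n_0}$, yielding a $\Ww_p$-distance of order $m^{-1/2}$ between the standardised $X$-block-sum and its Gaussian approximation. Rescaling by the block standard deviation (which is $\sim \sqrt{m}/N$ in the $K$-coordinates, coming from $\Var{K}=O(N^{-2})$, and smaller than $\sqrt{m/N}$ in the $\zeta$-coordinates) gives a per-block conditional $\Ww_p$-coupling error of order $1/N$. Telescoping along the dyadic tree exactly as in the proof of \cite[Theorem 1]{davie2014kmt}, each of the $n+1=O(\log N)$ refinement levels contributes $O(1/N)$ to the $L^p$-norm of $\max_r \abs{\sum_{j<r}(X^{(j)}-Y^{(j)})}$, summing to the advertised $O(\log N /N)$ bound. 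Translating back to $A-B$ via the first paragraph concludes the proof.

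The main obstacle is producing the Edgeworth expansion of the block-sum density of $X^{(j)}=(\zeta^{(j)}, K^{(j)})$ in the precise $(1+\abs{y})^p$-weighted form required by hypothesis (\ref{e-hyp-estimate}) of Proposition \ref{p-poly-perturb-bound}. Because the $X^{(j)}$ arise as quadratic functionals of independent Brownian bridges, their joint density is not available in closed form, and the expansion with polynomial coefficients $S_j$ of controlled degree and coefficients must be obtained through careful characteristic-function estimates with uniform control over the remainder across all dyadic scales $m=2^k$. The parameter $n_p$ must also be tuned so that $(n_p+1)/p$ is large enough for the third term in Proposition \ref{p-poly-perturb-bound} to be absorbed. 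Once this expansion is in place, the rest of the argument is essentially bookkeeping around the per-level application of Proposition \ref{p-poly-perturb-bound}.
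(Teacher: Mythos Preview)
Your outline correctly identifies the dyadic KMT scheme and the role of Proposition \ref{p-poly-perturb-bound} in upgrading to general $p$, but the reduction in your first paragraph does not work. You propose to couple $X^{(j)}=(\zeta^{(j)},K^{(j)})$ with $Y^{(j)}=(z^{(j)},\lambda^{(j)})$ in $\RR^{d_1}$, obtain an $O(\log N/N)$ bound on the partial sums of $X^{(j)}-Y^{(j)}$, and then ``absorb the $W$-factors by H\"older''. There are two problems. First, the $W^{(j)}$ sit \emph{inside} the sum: the relevant quantity is $\sum_{j<r}(\zeta^{(j)}_k-z^{(j)}_k)W^{(j)}_l$, with a different weight $W^{(j)}_l$ in each term, and no H\"older step converts a bound on $\sum_{j<r}(\zeta^{(j)}_k-z^{(j)}_k)$ into a bound on this weighted sum. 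Second, if the coupling of $(\zeta,z)$ is constructed independently of the $W^{(j)}$ (as your reduction requires), then by independence
\[
\EE\Big[\Big(\sum_{j<N}(\zeta^{(j)}_k-z^{(j)}_k)W^{(j)}_l\Big)^2\Big]=\frac{1}{N}\sum_{j<N}\EE\big[(\zeta^{(j)}_k-z^{(j)}_k)^2\big]\ \geq\ \frac{1}{N}\sum_{j<N}\Ww_2\big(\Ll(\zeta^{(j)}_k),\Ll(z^{(j)}_k)\big)^2,
\]
and since $\sqrt{12N}\,\zeta^{(0)}_k$ has a fixed non-Gaussian law, each $\Ww_2$-distance on the right is of exact order $N^{-1/2}$, so the whole expression is bounded below by a positive constant times $N^{-1}$. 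Hence no coupling of this type can do better than $N^{-1/2}$ in $L^2$, far short of $\log N/N$. (This is also why your per-level error claim of $O(1/N)$ holds only in the $K$-coordinates: the $\zeta$-block standard deviation is $\sqrt{m/N}$, and rescaling the standardised $\Ww_p\sim m^{-1/2}$ bound gives $N^{-1/2}$, not $N^{-1}$.)

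The paper avoids this by never separating $W^{(j)}$ from the coupling. It writes $A^{(r)}=N^{-1}G_rX^{(r)}$ with $G_r$ a $W^{(r)}$-dependent matrix, conditions on the $\sigma$-algebra $\Gg=\sigma(W^{(0)},\ldots,W^{(N-1)})$, and couples the \emph{weighted} block sums $Y_E=2^{-n/2}\sum_{r\in E}G_rX^{(r)}$ with their Gaussian counterparts $Z_E$ conditionally on $\Gg$. The Edgeworth-type expansion (Lemma \ref{l-davie-lemma-4-new}) is accordingly proved for the conditional density of $Y_E$ given $\Gg$, with the random conditional covariance $H_E=2^{-n}\sum_{r\in E}G_rG_r^t$ appearing throughout; the exponential tail bounds on $\norm{G_r}$ and $\norm{H_E}$ (Lemma \ref{l-h-exp-tails}) are needed precisely to control this randomness. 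In short, the $W$-weighting must be built into the objects being coupled from the outset, not factored out afterward.
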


Before presenting the proof in the Section \ref{s-coupling-proof}, we first set up some notation, then consider a version of the central limit theorem in the next section. We fix a sequence of independent Brownian increments $\{W^{(j)}\}_{j=0}^{N-1}$ with $W^{(j)} \sim N(0,N^{-1}I_d)$, and let $\Gg$ denote the $\sigma$-algebra generated by these variables. 

For each $r\in \{0,1,\ldots,N-1\}$ define the random vector $X^{(r)} \in \RR^{d_1}$ by 
\begin{align*}
X^{(r)}_k &:= \sqrt{12N}\zeta^{(r)}_k \textup{ for } 1\leq k \leq d;\\
 X^{(r)}_{\frac{k}{2}(2d-k-1)+l} &:= \sqrt{12N^2} K^{(r)}_{kl} \textup{ for } 1\leq k < l \leq d. 
\end{align*}
Then, conditional on $\Gg$, $X^{(r)}$ has mean zero and covariance matrix $I_{d_2}$. We can then write $A^{(r)} = N^{-1} G_r X^{(r)}$, where $G_r$ is a $d_2 \times d_1$ matrix defined in terms of the $W^{(j)}$. Specifically,
\[
G_{r}=\frac{1}{\sqrt{12}}\left(\begin{array}{c|c}
M_{r} & I_{d_2}\end{array}\right), 
\]
where $M_r$ is the $d_2 \times d$ matrix defined by the rows
\[
\bra{M_r}_{\frac{k}{2}(2d-k-1)+(l-d)} = \sqrt{N}\bra{ W^{(r)}_l e_k - W^{(r)}_k e_l}.
\] 
This makes $M_r$ have the form:
\[
M_r = \sqrt{N}
\left(\begin{array}{cccccc}
W_{2}^{(r)} & -W_{1}^{(r)} & 0 & \cdots & 0 & 0\\
W_{3}^{(r)} & 0 & -W_{1}^{(r)} & \cdots & 0 & 0\\
\vdots & \vdots & \vdots & \ddots & \vdots & \vdots\\
0 & 0 & 0 & \cdots & W_{d}^{(r)} & -W_{d-1}^{(r)}
\end{array}\right).
\]
In the same way we can write $B^{(r)} = N^{-1}G_r \tilde{X}^{(r)}$, where $\tilde{X}^{(r)}$ has the normal distribution $N(0,I_{d_1})$. 

By scaling we can see that to complete the proof it is sufficient to construct a coupling of the random walks composed of the vectors $X^{(r)}$ and $\tilde{X}^{(r)}$, conditional on $\Gg$, such that 
\begin{equation}\label{e-g-x}
\max_{r=1,\ldots,N} \abs{ \norm{ \sum_{j=0}^{r-1} G_j\bra{X^{(j)} - \tilde{X}^{(j)} }}_{\RR^{\frac{d}{2}(d-1)}}}_{L^p} \leq C_p\log N
\end{equation}
for some constant $C_p>0$. 
To this end, we first note that without loss of generality we may assume $N=h^{-1}=2^m$ for some $m\in\NN$. We define a dyadic set to be a subset $E\subseteq \{0,1,\ldots,2^m-1\}$ of the form 
\[
E = \{k2^n, k2^n+1, \ldots, (k+1)2^n - 1\}, 
\]
for some integers $n\in\{0,1,\ldots,m\}$ and $k\in \{0,1,\ldots,2^{m-n}-1\}$. Since a consecutive set $F\subset \{0,1,\ldots,2^m-1\}$ can be expressed as the disjoint union of at most $\log_2 N$ dyadic subsets $E_1,\ldots,E_k$ of different sizes, we need only prove
\begin{equation}\label{e-g-x-2}
\EE\bra{ \norm{\sum_{r\in E} G_r \bra{X^{(r)} - \tilde{X}^{(r)}}}_{\RR^{\frac{d}{2}(d-1)}}^p} \leq C_p
\end{equation}
for all dyadic sets $E$ in order to establish (\ref{e-g-x}). 

Next, for each dyadic set $E$ of size $2^n$ let us define the matrix 
\begin{equation}\label{e-H-e-pos-def}
H_E := 2^{-n} \sum_{r\in E} G_r G_r^t
\end{equation}
along with the random variables
\[
Y_E:= 2^{-n/2} \sum_{r\in E} G_r X^{(r)} \textup{ and } Z_E:= 2^{-n/2}\sum_{r\in E} G_r \tilde{X}^{(r)}. 
\]
Since, conditional on $\Gg$, the random variables $\{A^{(j)}\}_{j=0}^{N-1}$ are independent, $H_E$ is the (conditional) covariance matrix of $Y_E$. Similarly, $H_E$ is also the (conditional) covariance matrix of $Z_E$. Note that $H_E^{-1}$ is well-defined since each product $G_rG_r^t$ is a positive-definite symmetric matrix. Indeed, block matrix multiplication confirms that 
\[
G_r G_r^t = \frac{1}{12}\bra{I_{d_2} + M_r M_r^t}.
\]
It also follows that the eigenvalues of $G_rG_r^t$ are bounded below by $\frac{1}{12}$ and so $\norm{(G_rG_r^t)^{-1}} \leq 12$. Similarly, $\norm{H_E^{-1}} \leq 12$ for all dyadic $E$. Moreover, since $\sqrt{N}W^{(r)}_k \sim N(0,1)$, certainly $\EE\bra{\norm{G_r}^p} \leq C_p$ for all $p\geq 1$. In fact, $\norm{G_r}^2$ possesses exponential tails (a property which we will exploit later in the coupling construction of Section \ref{s-coupling-proof}). 

\begin{Lemma}\label{l-h-exp-tails}
For all $\alpha \in (0,\frac{1}{48d})$ there exists a constant $C_{\alpha} >0$ such that
\[
\max_{r=0,1,\ldots,N-1}\EE\bra{e^{\alpha\norm{G_r}^2}} \leq C_{\alpha}
\]
Similarly, $\EE\bra{e^{\alpha\norm{H_E}}} \leq C_{\alpha}$ for all consecutive sets $E$. 
\end{Lemma}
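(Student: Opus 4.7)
The plan is to reduce both moment-generating function estimates to the standard chi-squared MGF computation by exploiting the block decomposition $G_r G_r^t = \frac{1}{12}(I_{d_2} + M_r M_r^t)$ noted just above the lemma. Since $I_{d_2}+M_rM_r^t$ is positive-definite with eigenvalues $1+\lambda_i(M_rM_r^t)$, this gives $\norm{G_r}^2 = \norm{G_rG_r^t} = \frac{1}{12}(1+\norm{M_r}^2)$, reducing the problem to controlling $\norm{M_r}^2$ by a chi-squared variable.

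First I would prove the sharp operator-norm bound $\norm{M_r}^2 \leq N\abs{W^{(r)}}^2$ via a Lagrange-type identity. For any $v\in\RR^d$, the $(k,l)$-coordinate of $M_r v$ is $\sqrt{N}(W^{(r)}_l v_k - W^{(r)}_k v_l)$, so
\[
\norm{M_r v}^2 = N\sum_{1\leq k<l\leq d}(W^{(r)}_l v_k - W^{(r)}_k v_l)^2 = N\bra{\abs{W^{(r)}}^2\abs{v}^2 - (W^{(r)}\cdot v)^2} \leq N\abs{W^{(r)}}^2 \abs{v}^2.
\]
Setting $\chi_r^2 := N\abs{W^{(r)}}^2$, the $d$ coordinates $\sqrt{N}W^{(r)}_k$ are i.i.d.\ $N(0,1)$, so $\chi_r^2\sim\chi^2_d$, and therefore $\norm{G_r}^2 \leq \frac{1}{12}(1+\chi_r^2)$. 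Plugging this into the classical formula $\EE\bra{e^{t\chi^2_d}} = (1-2t)^{-d/2}$, valid for $t<1/2$, the condition $\alpha/12 < 1/2$ is easily satisfied under $\alpha < 1/(48d)$, and one obtains
\[
\EE\bra{e^{\alpha\norm{G_r}^2}} \leq e^{\alpha/12}\bra{1-\alpha/6}^{-d/2} =: C_\alpha < \infty,
\]
uniformly in $r$.

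For $H_E$ with $|E| = 2^n$, positive semidefiniteness of each $G_rG_r^t$ gives
\[
\norm{H_E} \leq 2^{-n}\sum_{r\in E}\norm{G_rG_r^t} \leq \frac{1}{12}\bra{1 + 2^{-n}\sum_{r\in E}\chi_r^2},
\]
and by independence $\sum_{r\in E}\chi_r^2\sim\chi^2_{d\cdot 2^n}$. The MGF formula therefore yields
\[
\EE\bra{e^{\alpha\norm{H_E}}} \leq e^{\alpha/12}\bra{1 - \frac{\alpha}{6\cdot 2^n}}^{-d\cdot 2^n/2}.
\]
The only delicate point is that the right-hand side must be bounded uniformly in $n$; this follows from the elementary estimate $-\log(1-x)\leq 2x$ on $[0,1/2]$ (applicable since $\alpha/(6\cdot 2^n)\leq \alpha/6 < 1/2$ for $\alpha<1/(48d)$), which gives $(1-\alpha/(6\cdot 2^n))^{-d\cdot 2^n/2} \leq e^{d\alpha/6}$, hence $\EE\bra{e^{\alpha\norm{H_E}}} \leq e^{\alpha/12 + d\alpha/6}$, independent of $n$. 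The passage from dyadic to arbitrary consecutive $E$ is automatic since the averaging bound on $\norm{H_E}$ uses no dyadic structure.

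The one step requiring thought is the Lagrange identity producing the sharp control $\norm{M_rv}\leq \sqrt{N}\abs{W^{(r)}}\abs{v}$; once this is in place, the rest is a routine chi-squared MGF calculation together with the elementary $-\log(1-x)\leq 2x$ inequality used to verify uniformity in $n$. The range $\alpha<1/(48d)$ is in fact conservative — the same argument delivers finiteness for every $\alpha<3$ — but the stated threshold is amply sufficient.
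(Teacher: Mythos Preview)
Your proof is correct and follows essentially the same strategy as the paper: bound $\norm{M_r}^2$ by a multiple of $N\abs{W^{(r)}}^2$, reduce to a chi-squared moment generating function, and then pass to $H_E$ by averaging and independence. The one substantive difference is that the paper uses the cruder Frobenius bound $\norm{M_r}^2 \leq \sum_{i,j}\abs{M_r(i,j)}^2$ together with $(\norm{M_r}+1)^2\leq 2(\norm{M_r}^2+1)$, whereas you use the Lagrange identity to obtain the sharp operator-norm bound $\norm{M_r}^2\leq N\abs{W^{(r)}}^2$ and the exact equality $\norm{G_r}^2=\tfrac{1}{12}(1+\norm{M_r}^2)$; this is why your argument delivers finiteness well beyond the stated threshold $\alpha<1/(48d)$, while the paper's constants are tuned to that range. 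Your treatment of the $H_E$ bound, with the explicit $-\log(1-x)\leq 2x$ step to secure uniformity in $n$, is also more careful than the paper's compressed final line.
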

\begin{proof}
Recall that the matrix norm $\norm{M_r}$ is bounded by the $l_2$-norm of its entries $\{M_r(i,j)\}_{i,j}$. Hence, 
\[
\norm{M_r}^2 \leq \sum_{i,j} \abs{M_r(i,j)}^2 = 2h^{-1}\sum^d_{k=1} \abs{W^{(r)}_k}^2,
\]
and so for all $\alpha \in (0,\frac{1}{4d})$ we have 
\[
\EE\bra{e^{\alpha \norm{M_r}^2}} \leq \prod_{k=1}^d \EE\bra{e^{2\alpha h^{-1}\abs{W_k^{(r)}}^2}} 
= \EE\bra{e^{2\alpha dh^{-1} \abs{W^{(r)}_k}^2}} = C_{\alpha} < \infty. 
\]
Thus for small enough $\alpha \in (0,\frac{1}{48d})$, 
\[
\EE\bra{e^{\alpha \norm{G_r}^2}} 
\leq \EE\bra{e^{\frac{\alpha}{12}\bra{\norm{M_r}+1}^2}}
\leq e^{\frac{\alpha}{6}}\EE\bra{e^{\frac{\alpha}{6}\norm{M_r}^2}} \leq C_{\alpha}. 
\]
The second statement then follows:
\begin{align*}
\EE\bra{e^{\alpha \norm{H_E}}} 
\leq 
\EE\,{\exp\bra{\alpha 2^{-n}\sum_{r\in E} \norm{G_r}^2}}
&=\prod_{r\in E} \EE\bra{e^{\alpha 2^{-n} \norm{G_r}^2}}
= \EE\bra{e^{\alpha \norm{G_r}^2}} = C_{\alpha}.\label{e-h-exp-tails}
\end{align*}
The proof is complete. 
\end{proof}

The aim is to prove the following proposition, from which (\ref{e-g-x-2}) follows immediately, thereby establishing Theorem \ref{t-coupling}.

\begin{Proposition}\label{p-coupling-y-z}
With the notation above, for every $p \in [1,\infty)$ there exists a constant $C_p>0$ and a coupling of $\{Y_E\}_{E \subseteq E_0}$ and $\{Z_E\}_{E \subseteq E_0}$, conditional on $\Gg$, such that 
\[
\abs{ \norm{Y_E - Z_E}_{\RR^{\frac{d}{2}(d-1)}}}_{L^p} \leq C_p 2^{-n/2}
\]
for every dyadic set $E$ of size $2^n$, where $n\leq m = \log_2N$.
\end{Proposition}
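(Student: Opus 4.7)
The plan is to work conditionally on $\mathcal{G}$ and use the polynomial perturbation machinery of Section \ref{s-poly-perturb} to bound the conditional Wasserstein distance $\mathcal{W}_p(\mathcal{L}(Y_E \mid \mathcal{G}), \mathcal{L}(Z_E \mid \mathcal{G}))$, after which Proposition \ref{p-was-eq} will provide the actual coupling. Since $Z_E$ is a linear combination of independent Gaussians, $Z_E \mid \mathcal{G} \sim N(0, H_E)$ exactly. Because $H_E$ is positive definite with $\|H_E^{-1}\| \leq 12$, I would first apply the $\mathcal{G}$-measurable change of variables $\tilde{Y}_E := H_E^{-1/2} Y_E$, reducing the problem to estimating $\mathcal{W}_p(\mathcal{L}(\tilde{Y}_E \mid \mathcal{G}), \phi)$, where $\tilde{Y}_E = 2^{-n/2} \sum_{r \in E} H_E^{-1/2} G_r X^{(r)}$ has conditional covariance $I_{d_2}$.

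The main technical step is to establish a quantitative local central limit theorem for $\tilde{Y}_E$. Because the summands $H_E^{-1/2} G_r X^{(r)}$ are conditionally independent with mean zero and enough moments, and the joint density of $(\zeta^{(r)}, K^{(r)})$ is smooth, a Cram\'er-type Edgeworth expansion should yield that the conditional density of $\tilde{Y}_E$ takes the form
\[
\phi(y)\bra{1 + \sum_{j=1}^{K} \varepsilon^j S_j(y)}
\]
with error controlled by $O(\varepsilon^{K+1})$ in the $\int(1+\abs{y})^p\,d\abs{\cdot}$ norm, where $\varepsilon = 2^{-n/2}$ and the polynomials $S_j \in P_I$ have coefficients that are polynomial in $\|G_r\|$ and $\|H_E^{-1/2}\| \leq \sqrt{12}$. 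Applying Proposition \ref{p-poly-perturb-bound} with $n_0 = 1$ and truncation $K$ chosen sufficiently large relative to $p$, the terms $\delta^{1/p}$ and $\varepsilon^{(K+1)/p}$ become negligible compared to $\varepsilon^{n_0} = 2^{-n/2}$, yielding $\mathcal{W}_p(\mathcal{L}(\tilde{Y}_E \mid \mathcal{G}), \phi) \leq C(\mathcal{G}) \cdot 2^{-n/2}$ for a $\mathcal{G}$-measurable constant $C(\mathcal{G})$ polynomially dependent on $\|G_r\|$ for $r \in E$. Transforming back gives $\mathcal{W}_p(\mathcal{L}(Y_E \mid \mathcal{G}), N(0, H_E)) \leq \|H_E^{1/2}\| \cdot C(\mathcal{G}) \cdot 2^{-n/2}$, and H\"older combined with Lemma \ref{l-h-exp-tails} removes the $\mathcal{G}$-dependence to give the unconditional bound $\EE\abs{Y_E - Z_E}^p \leq C_p 2^{-np/2}$. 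Proposition \ref{p-was-eq} then supplies a coupling, with the family coupling over all dyadic $E \subseteq E_0$ obtained by a KMT-style dyadic top-down construction: couple $(Y_{E_0}, Z_{E_0})$ optimally, then iteratively couple subsums conditionally on their parent sums.

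The main obstacle will be the Edgeworth expansion itself: the summands $H_E^{-1/2} G_r X^{(r)}$ are not identically distributed (their distributions depend on $W^{(r)}$ through $G_r$), and $X^{(r)}$ is built from nonlinear Brownian functionals $\zeta^{(r)}, K^{(r)}$ whose joint density is not explicit. One must verify enough regularity of this density (or a Cram\'er-type condition) and produce explicit polynomial coefficient bounds in terms of $\|G_r\|$, uniform over the dyadic set $E$, so that Lemma \ref{l-h-exp-tails} can absorb them. A secondary difficulty is making the dyadic coupling simultaneous over all $E \subseteq E_0$ at the correct rate $2^{-n/2}$ per level; this should follow the hierarchical argument of Davie in \cite{davie2014kmt}, now leveraging the strengthened Proposition \ref{p-poly-perturb-bound} to obtain the full $\mathcal{W}_p$ estimate rather than only $\mathcal{W}_2$.
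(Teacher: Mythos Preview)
Your plan---an Edgeworth-type expansion for the (conditional) density of $Y_E$, fed into Proposition \ref{p-poly-perturb-bound}, combined with a top-down dyadic coupling---is exactly the paper's strategy. Two points, however, are handled differently in the paper and would cause trouble if left as you describe them.

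First, you propose to let the Edgeworth polynomial coefficients depend polynomially on $\|G_r\|$ and then remove the $\mathcal G$-dependence by H\"older against Lemma \ref{l-h-exp-tails}. The obstacle is that Proposition \ref{p-poly-perturb-bound} gives a constant $C_{d,M,n,p,s}$ with no explicit dependence on the coefficient bound $M$; the recursive construction of $\mathcal S_I^{-1}$ makes this dependence awkward to track. The paper avoids the issue entirely by first restricting to the event $\mathcal E=\{\|G_r\|\le 2^{n\eta}:r\in E\}$ for a small parameter $\eta\in(0,\tfrac{1}{44})$. On $\mathcal E$ the polynomials $S_{E,2k}$ from Lemma \ref{l-davie-lemma-4-new} have \emph{universally} bounded coefficients, with the $\|G_r\|$-growth absorbed into a slightly inflated $\varepsilon=2^{n(2\eta-1/2)}$; Proposition \ref{p-poly-perturb-bound} then applies with deterministic $M$. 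The complement $\mathcal E^c$ has probability $\le Ce^{-c2^{2n\eta}}$ and is handled by an independent coupling. Incidentally, the characteristic function $\psi$ of $X^{(r)}$ is real and even, so the expansion of $\log\psi$ starts at degree four and one has $n_0=2$ rather than $n_0=1$.

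Second, what you call the ``secondary difficulty'' is really the heart of the proof, and your per-$E$ bound $\mathcal W_p(\mathcal L(Y_E\mid\mathcal G),N(0,H_E))$ does not by itself drive the recursion. Given a coupling of $(Y_E,Z_E)$ and writing $E=F\cup G$, one must bound $\mathcal W_p(f_x,g_x)$, where $f_x$ is the density of $Y_F$ \emph{conditional on} $Y_E=x$ and $g_x$ is the density of $N(Jx,H)$ with $J=H_FH_E^{-1}$, $H=\tfrac12 H_FH_E^{-1}H_G$. The paper does this via the identity $f_x(y)=2^{1/2}f_F(y)f_G(2^{1/2}x-y)/f_E(x)$, applying Lemma \ref{l-davie-lemma-4-new} to each of $f_E,f_F,f_G$ and combining the three expansions; an additional truncation event $\{Y_E\notin\Omega\}$ is needed to control the tail of $Y_F$ under the conditioning. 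One then sets $Z_F:=Z_F^*+H_FH_E^{-1}(Z_E-Y_E)$ to correct for $Y_E\neq Z_E$, and the final estimate on $Y_E-Z_E$ comes from telescoping these corrections along the chain $E=E_k\subset\cdots\subset E_0$. You should expect this conditional-density step, not the marginal Wasserstein bound for a single $E$, to be where the bulk of the work lies.
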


We copy the coupling construction of \cite[Theorem 1]{davie2014kmt} except for modifications which are needed to establish general $L^p$-estimates. This is because the original result only established the coupling for $p\in [1,4)$.
Thus we require the higher order polynomial perturbation result of Proposition \ref{p-poly-perturb-bound} along with Lemma \ref{l-davie-lemma-4-new} of the next section. Hence we postpone the proof of Proposition \ref{p-coupling-y-z} to Section \ref{s-coupling-proof}. 

\begin{Remark}
It is a common practice in Gaussian rough path theory to use Wiener-It\^{o} chaos to establish general $L^p$-bounds from the $p=2$ case (\cite[Theorem D.8]{FV}). 
Certainly each $B^{(j)}$ increment lives in the second inhomogeneous Wiener chaos (being a quadratic polynomial of Gaussian random variables), and it is well-known that L\'{e}vy area also lives in a (possibly different) second Wiener chaos \cite[Proposition 15.19]{FV}.
One could ask whether we could use this theory to immediately get all $L^p$-estimates from the $p=2\in [1,4)$ case. Alas  our coupling argument does not necessarily guarantee that the random variables $\{A^{(j)}\}_{j=0}^{N-1}$ and $\{B^{(j)}\}_{j=0}^{N-1}$ belong to the same Wiener chaos and so we cannot apply the theory to their difference. 
\end{Remark} 

\section{A central limit theorem}

In light of the previous polynomial perturbation extension of Proposition \ref{p-poly-perturb-bound}, we need a modified version of \cite[Lemma 4]{davie2014kmt} for the proof of Proposition \ref{p-coupling-y-z} in the next section. This is contained in the following lemma. It can be viewed as a version of the central limit theorem, stating that the density of $Y_E$ is close to the (Gaussian) density of $Z_E$ as the size of the dyadic set $E$ increases.  

\begin{Lemma}\label{l-davie-lemma-4-new}
Let $E$ be a dyadic set of size $2^n$ and let $f_E$ be the density function of $Y_E$, conditional on $\Gg$. Fix $\eta\in (0,\frac{1}{20})$ and an integer $\kappa\geq 2$. 
 Then, provided that $\norm{G_r} \leq 2^{n\eta}$ and $\norm{(G_rG_r^t)^{-1}} \leq 2^{2n\eta}$, there exists a constant $C_\kappa>0$ such that the following holds: for each $r\in E$ we have
\[
\abs{f_E - \phi_{H_E} \bra{1+ \sum_{k=2}^\kappa 2^{nk\bra{2\eta-\frac{1}{2}}} S_{E,2k}}  }(v) 
\leq C_\kappa2^{n\kappa\bra{10\eta -\frac{1}{2}}} \phi_{H_E}(v)
\]
for all $\abs{v}\leq 2^{n\eta}$. Here, each $S_{E,2k}\in P(\RR^{d_2})$ is respectively a polynomial of degree $2k$ with coefficients whose absolute values are bounded by some universal constant $C>0$ independent of $E$ and $k$.
\end{Lemma}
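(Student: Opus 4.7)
The plan is to carry out a local Edgeworth-type expansion of the conditional density $f_E$ via the characteristic function. Since $\{X^{(r)}\}_{r \in E}$ are i.i.d.\ and independent of $\Gg$, the conditional characteristic function factors as
\[
\hat{f}_E(t \mid \Gg) = \prod_{r \in E} \varphi\bra{2^{-n/2} G_r^t t}, \qquad \varphi(u) := \EE\bra{e^{i u^t X^{(r)}}},
\]
and the components of $X^{(r)}$ have exponential tails coming from the explicit Brownian-bridge representation in Lemma \ref{l-area-decomposition}. Taking logarithms and Taylor-expanding each $\log \varphi$ in terms of the joint cumulants $\chi_m$ of $X^{(r)}$, and then summing over $r \in E$, yields
\[
\log \hat{f}_E(t \mid \Gg) = -\tfrac{1}{2} t^t H_E t + \sum_{m \geq 3} 2^{-n(m/2 - 1)} \Psi_m(t),
\]
where each $\Psi_m(t) = 2^{-n} \sum_{r \in E} \tfrac{i^m}{m!} \chi_m(G_r^t t)$ is a homogeneous polynomial of degree $m$ in $t$ whose coefficients are bounded by $C_m 2^{n m \eta}$ thanks to the hypothesis $\norm{G_r} \leq 2^{n\eta}$.

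The next step is to exponentiate, truncate, and regroup. Writing $\hat{f}_E(t \mid \Gg) = e^{-\frac{1}{2} t^t H_E t} \exp\bra{R(t)}$ with $R$ denoting the cumulant tail, I would expand $\exp(R)$ as a power series, regroup terms according to the natural small parameter $2^{n(2\eta - 1/2)}$, and truncate at order $\kappa$ to obtain a representation of the form
\[
\hat{f}_E(t \mid \Gg) = e^{-\frac{1}{2} t^t H_E t} \Bigl(1 + \sum_{k=2}^{\kappa} 2^{n k (2\eta - 1/2)} T_{E,2k}(i t)\Bigr) + \mathcal{R}(t),
\]
where each $T_{E,2k}(it)$ is a polynomial of degree $2k$ in $t$ and $\mathcal{R}(t)$ is a Fourier-domain remainder. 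Fourier inversion is then standard: since $\mathcal{F}^{-1}\bra{e^{-\frac{1}{2} t^t \Sigma t} P(it)}(v) = \phi_\Sigma(v) \widetilde{P}(v)$ with $\widetilde{P}$ a polynomial of the same degree whose coefficients depend on $\Sigma^{-1}$, the polynomial factor is transformed into the claimed density expansion with polynomials $S_{E,2k}$ of degree $2k$. The uniform bound $\norm{H_E^{-1}} \leq 12$ (established in the paragraph preceding Proposition \ref{p-coupling-y-z}, and in fact stronger than the hypothesis $\norm{(G_r G_r^t)^{-1}} \leq 2^{2n\eta}$) then guarantees that the coefficients of $S_{E,2k}$ admit a universal bound independent of $E$ and $k$. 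The restriction to even-degree $S_{E,2k}$ (starting at $k=2$) reflects the partial symmetries of the Brownian-bridge distribution underlying $X^{(r)}$, so that odd-cumulant contributions either vanish or are absorbed into higher-order pieces bounded by the final remainder.

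To transfer the Fourier-domain estimate to a pointwise bound on $f_E(v) - \phi_{H_E}(v)\bra{1 + \sum_k \ldots}$, I would split the Fourier inversion integral at an intermediate scale $\abs{t} \sim 2^{n \gamma}$ with $\gamma \in (\eta, 1/2)$ to be optimized. On $\abs{t} \leq 2^{n\gamma}$, the truncation error from the Edgeworth expansion is controlled by the next-order cumulant contribution, which upon integration and evaluation at $\abs{v} \leq 2^{n\eta}$ gives a term of the desired order $C_\kappa 2^{n \kappa (10\eta - 1/2)} \phi_{H_E}(v)$. On $\abs{t} > 2^{n\gamma}$, the strict positive-definiteness $H_E \succeq \tfrac{1}{12} I$ yields a Gaussian lower bound on $e^{-\frac{1}{2} t^t H_E t}$, while the characteristic function $\abs{\hat{f}_E(t \mid \Gg)}$ enjoys a comparable decay, so that the tail integral contribution is of the same or smaller order.

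The main obstacle lies in the bookkeeping: one has to track simultaneously the polynomial growth of $\Psi_m$ in $\abs{t}$ (bounded by powers of $2^{n\eta}$), the composition with the inverse covariance $H_E^{-1}$ when inverting the Fourier transform, the scale of the polynomial $S_{E,2k}(v)$ on $\abs{v} \leq 2^{n\eta}$, and the cutoff scale $2^{n\gamma}$ used in the Fourier split. In particular, the specific exponent $10\eta$ in the remainder arises from an optimal balance among these scales; getting it precisely right, and verifying that the resulting $S_{E,2k}$ genuinely have degree $2k$ with universally bounded coefficients, is the delicate part of what is otherwise a conceptually standard Edgeworth argument.
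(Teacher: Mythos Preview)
Your outline follows the same Edgeworth-via-characteristic-function strategy as the paper, and most ingredients you name (factorisation of $\hat f_E$, cumulant expansion, truncation at degree $2\kappa$, Fourier inversion, a split of the $t$-integral) are the right ones. The vanishing of odd terms is not a ``partial symmetry'' to be absorbed later: $\psi$ is real-valued and even, so $X^{(r)}\stackrel{d}{=}-X^{(r)}$ and the expansion of $\log\psi$ contains only even-degree homogeneous polynomials $c_{2k}$ from the outset. But this is minor; the real gap is elsewhere.

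You assert that the Fourier-domain remainder, after inversion, produces an error of size $C_\kappa 2^{n\kappa(10\eta-1/2)}\phi_{H_E}(v)$. A direct bound $\bigl|\int e^{-it^tv}\mathcal R(t)\,dt\bigr|\le\int|\mathcal R(t)|\,dt$ gives only an \emph{absolute} error, and on the region $|v|\le 2^{n\eta}$ the factor $\phi_{H_E}(v)$ can be as small as $\exp(-c\,2^{2n\eta})$, so dividing through destroys the estimate. The paper's remedy is a complex contour shift: writing
\[
f_E(v)=(2\pi)^{-d_2/2}e^{-v^tH_E^{-1}v}\int_{\RR^{d_2}} e^{-iu^tv}\,\Psi\bra{u-iH_E^{-1}v}\,du
\]
extracts $\phi_{H_E}(v)$ \emph{before} any truncation, so that every subsequent error is automatically relative to $\phi_{H_E}(v)$. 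This step requires that $\psi$ extend analytically to a strip in $\mathbb C^{d_1}$ and satisfy bounds of the form $|\psi(x+iy)|\le e^{-|x|^2/6}$ for small $|x|$ and $|\psi(x+iy)|\le\min(\gamma,C|x|^{-1})$ for large $|x|$; your treatment of the tail ($|t|>2^{n\gamma}$) invokes only real-axis decay of $|\hat f_E|$, which is not enough once the contour has been moved. Finally, with the shift in place the exponent $10\eta$ is not the outcome of optimising a free cutoff $\gamma$: the integration is cut at $|u|\sim 2^{4n\eta}$, the truncation remainder carries $2^{n\kappa(2\eta-1/2)}|w|^{2\kappa}$, and $|w|\lesssim 2^{4n\eta}$ supplies the extra $2^{8n\kappa\eta}$.
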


Note the original \cite[Lemma 4]{davie2014kmt} can be recovered by setting $\kappa=2$. 

\begin{proof}
The bounds on $G_r$ imply that $\norm{H_E} \leq 2^{2n\eta}$ and $ \norm{H_E^{-1}} \leq 2^{2n\eta}$. 

Let $\psi$ be the characteristic function of $X^{(r)}$ (which is independent of $r$). An explicit expression for $\psi$ can be found in \cite{wiktorsson2001joint}. Note that $\psi$ is real-valued and even on $\RR^{d_1}$ and extends to a complex-analytic function of a strip $\{x+iy : x,y\in \RR^{d_1},\, \abs{y}< a\}$ for some $a>0$. In a neighbourhood of $0$ in $\mathbb{C}^{d_1}$, $\log \psi$ has a convergent expansion 
\[
\log \psi(z) = -\frac{1}{2}\abs{z}^2 + c_4(z) + c_6(z) + \ldots,
\]
where $c_k(z)$ is a homogeneous polynomial of degree $k$ satisfying $\abs{c_k(z)} \leq (C\abs{z})^k$ for even $k\geq 4$. Thus, 
\[
\psi(z) = \exp\bra{-\frac{1}{2}\abs{z}^2 + \chi(z)} \textup{ where } \chi(z):= \sum_{k=2}^\infty c_{2k}(z).
\]
From this it follows that there exists some $\delta>0$ such that 
\begin{equation}\label{e-davie-20}
\textup{if } 
x,y\in \RR \textup{ with } 2\abs{y} \leq \abs{x} < \delta \textup{ then } \abs{\psi(x+iy)} \leq e^{-\frac{1}{6}\abs{x}^2}.
\end{equation}
Using the decay of $\psi(z)$ as $x=\textup{Re}(z) \to \infty$ and the fact that $\abs{\psi(x)} < 1$ for $0\neq x\in \RR^{d_1}$, we can find $\gamma \in (0,1)$ and $\delta^\prime >0$ so that 
\begin{align}
\textup{if } x,y\in\RR^{d_1} \textup{ with } \abs{x}\geq \delta &\textup{ and } 
\abs{y} \leq \delta^\prime
\textup{ then } \abs{\psi(x+iy)} \leq \textup{min}\bra{\gamma, C\abs{x}^{-1}}.\label{e-davie-21}
\end{align}
Now let $\Psi$ be the characteristic function of $Y_E$; $\Psi(u) = \prod_{r\in E} \psi\bra{2^{-n/2}G_r^t u}$. 
Recall that $\Psi$ is the Fourier transform of the density $f_E$ of $Y_E$, and taking the inverse Fourier transform we obtain the expression 
\[
f_E(v) = \bra{2\pi}^{-d_2/2} \int_{\RR^{d_2}} e^{-iu^tv}\Psi(u)\, du.
\] 
By translating the subspace of integration in $\mathbb{C}^{d_2}$ by $-iH_E^{-1}v$ we can rewrite this as 
\begin{align*}
f_E(v) &= \bra{2\pi}^{-d_2/2} e^{-v^t H_E^{-1}v} \int_{\RR^{d_2}} e^{-iu^tv} \Psi\bra{u-iH_E^{-1}v}\, du\\
&= \bra{\textup{det}\, H_E}^{1/2} \phi_{H_E}(v) \int_{\RR^{d_2}} e^{-iu^tv} \Psi\bra{u-iH_E^{-1}v}\, du. 
\end{align*}
Note that 
\begin{equation}\label{e-davie-prod}
\Psi\bra{u-iH_E^{-1}v} = \prod_{r\in E} \psi\bra{2^{-n/2}G_r^t u - i 2^{-n/2}G_r^t H_E^{-1}v}.
\end{equation}
If $\abs{u} \geq 2^{4n\eta + 1}$ then using (\ref{e-davie-20}) and (\ref{e-davie-21}) we see that each term in the product is bounded by either $\min\bra{\gamma,  C2^{n(\eta+1/2)}\abs{u}^{-1}}$ or $\exp\bra{-\frac{1}{6}2^{-n(1+2\eta)}\abs{u}^2}$. Consequently, the product (\ref{e-davie-prod}) is bounded by 
\[
\abs{\Psi\bra{u-iH_E^{-1}v}} \leq \min\bra{\gamma, C2^{n(\eta+1/2)}\abs{u}^{-1}}^{2^n} + \exp\bra{-\frac{1}{6}2^{-2n\eta }\abs{u}^2}
\]
for all $\abs{u} \geq 2^{4n\eta + 1}$. It then follows that 
\[
\int_{\seq{\abs{u}\geq 2^{4 n\eta + 1}}} \abs{\Psi\bra{u-iH_E^{-1}v}}\, du 
\leq C\seq{ 2^{nm}\gamma^{2^n} + \exp\bra{-2^{6 n\eta - 1} }}.
\]

To consider the case of $\abs{u}\leq 2^{4 n\eta + 1}$, we first set $w=u-iH_E^{-1}v$. We then have
\begin{align*}
e^{-iu^tv} \Psi(w) 
&=
e^{-iu^tv} \prod_{r\in E} \psi\bra{2^{-n/2} G_r^t w}\\
&= 
e^{-iu^tv} \exp\bra{\sum_{r\in E} \seq{-\frac{1}{2} 2^{-n} \abs{G_r^t w}^2 + \chi\bra{2^{-n/2}G_r^t w}}}\\
&= e^{-iu^t v}\exp\bra{\frac{1}{2}v^tH_E^{-1}v - \frac{1}{2}u^tH_E u + iu^tv + \Lambda(w)}\\
&=\exp\bra{\frac{1}{2}v^tH_E^{-1}v - \frac{1}{2}u^tH_E u + \Lambda(w)},
\end{align*}
where 
\[
\Lambda(w)=\sum_{r\in E} \chi\bra{2^{-n/2}G_r^tw} = \sum_{k=2}^\infty T_{2k}(w) \textup{ with } T_{2k}(w)=2^{-kn} \sum_{r\in E} c_{2k}\bra{G_r^t w}. 
\]
We see that $T_{2k}$ is a homogeneous polynomial of degree $2k$ and satisfies 
\[
\abs{T_{2k}(w)} \leq C2^{n(1 + k(2\eta-1))}\abs{w}^{2k}.
\] 

Next, we approximate $e^{\Lambda(w)}$ by an inhomogeneous polynomial of degree $2\kappa$ composed of $\{T_{2k}\}_{k=2}^\kappa$ and their powers (as mentioned previously, the original proof of \cite{davie2014kmt} set $\kappa=2$). First note that 
\begin{align*}
\abs{\sum_{k=\kappa+1}^\infty T_{2k}(w)} \leq \sum^\infty_{k=\kappa+1} \abs{T_{2k}(w)} 
&\leq C \sum^\infty_{k=\kappa+1} 2^{n(1 + k(2\eta-1))}\abs{w}^{2k}\\
&= C2^n\sum^\infty_{k=\kappa+1} \bra{2^{(2\eta-1)n}\abs{w}^2}^k\\
&\leq C2^n \bra{2^{(2\eta-1)n}\abs{w}^2}^{2(\kappa+1)}\\
&= C2^{n\bra{1+2(\kappa+1)(2\eta-1)}}\abs{w}^{4(\kappa+1)}
\end{align*}
and so the simple Taylor approximation $e^x = 1+ O(x)$ for small $x\geq 0$ gives 
\begin{align}
e^{\Lambda(w)} 
= \prod_{k=2}^\infty e^{T_{2k}(w)} 
&=  \prod_{k=2}^\kappa e^{T_{2k}(w)} \cdot \exp\bra{\sum^\infty_{k=\kappa+1} T_{2k}(w)}\notag\\
&= \prod_{k=2}^\kappa e^{T_{2k}(w)} \seq{ 1 + O\bra{2^{n\bra{1+2(\kappa+1)(2\eta-1)}}\abs{w}^{4(\kappa+1)}}}.\label{e-product-big}
\end{align}
We now consider each $e^{T_{2k}(w)}$ term in the product. For each $k$ and integer $l\geq 1$, 
\begin{align*}
e^{T_{2k}(w)} -1 =  \sum^\infty_{i=1} \frac{1}{i!}T_{2k}(w)^i 
&= \sum_{i=1}^{l} 
\frac{1}{i!}T_{2k}(w)^i + O\bra{\abs{T_{2k}(w)}^{l+1}}\\
&= \sum_{i=1}^{l} \frac{1}{i!}T_{2k}(w)^i + O\bra{2^{n\bra{1+k(2\eta-1)}(l+1)} \abs{w}^{2k(l+1)}}. 
\end{align*}
Since $k\geq 2$, $1-k\leq -\frac{k}{2}$ then $(1+k(2\eta-1)) \leq k\bra{2\eta-\frac{1}{2}}$. Hence, if for each $k$ we choose $l_k$ to be the smallest integer such that $k(l_k+1)\geq \kappa$, we have 
\begin{align}
e^{T_{2k}(w)} &= 1 + \sum^{l_k}_{i=1} \frac{1}{i!} T_{2k}(w)^i 
+ O\bra{2^{nk\bra{2\eta - \frac{1}{2}}(l_k+1)}\abs{w}^{2k(l_k+1)}}\notag\\
&= 1 + \sum^{l_k}_{i=1} \frac{1}{i!} T_{2k}(w)^i 
+ O\bra{2^{n\kappa\bra{2\eta-\frac{1}{2}}}\abs{w}^{2\kappa}}.\label{e-little-exp}
\end{align}
Combining (\ref{e-product-big}) and (\ref{e-little-exp}), it follows that 
\[
e^{\Lambda(w)} = 1 + \sum^{\kappa}_{k=2} q_{2k}(w) + O\bra{2^{n\kappa \bra{2\eta-\frac{1}{2}}}\abs{w}^{2\kappa}}
\]
for some homogeneous polynomials $\{q_{2k}\}_{k=2}^{\kappa}$, where each $q_{2k}$ has degree $2k$ and coefficients bounded by $C2^{nk \bra{2\eta-\frac{1}{2}}}$. 
Consequently, setting $Q_{2\kappa}(w):= \sum^\kappa_{k=2} q_{2k}(w)$, we have 
\begin{align*}
&e^{-\frac{1}{2}v^tH_E^{-1}v} \int_{\seq{\abs{u}\leq 2^{4 n\eta +1}}} 
\abs{ e^{-iu^tv}\Psi\bra{u-iH_E^{-1}v} - \bra{1+Q_{2\kappa}\bra{u-iH_E^{-1}v}}e^{-\frac{1}{2}u^t H_E u}}\, du\\
&\gap\gap
= e^{-\frac{1}{2}v^tH_E^{-1}v} \int_{\seq{\abs{u}\leq 2^{4 n\eta +1}}} 
\abs{e^{\Lambda(u-iH_E^{-1}v)} - 1 - Q_{2\kappa}(u-iH_E^{-1}v)}e^{-\frac{1}{2}u^tH_E u}\, du\\
&\gap\gap
\leq C2^{n\kappa\bra{2\eta-\frac{1}{2}}}  e^{-\frac{1}{2}v^tH_E^{-1}v}
\int_{\seq{\abs{u}\leq 2^{4 n\eta +1}}} 
e^{-\frac{1}{2}u^tH_Eu} \abs{u-iH_E^{-1}v}^{2\kappa}\, du\\
&\gap\gap=
C2^{n\kappa\bra{2\eta-\frac{1}{2}} + 8n\kappa}
= C2^{n\kappa\bra{10\eta-\frac{1}{2}}}
\end{align*}
where we have used the inequality $\abs{iH_E^{-1}v} \leq \norm{H_E^{-1}}\abs{v} \leq 12\abs{v}$. 
Moreover, 
\begin{align*}
\int_{\seq{\abs{u}\geq 2^{4 n\eta + 1}}} \abs{1+Q_{2\kappa}(u-iH_E^{-1}v)}
e^{-\frac{1}{2}u^tH_Eu}\, du
&\leq Ce^{-2^{n\eta}}.
\end{align*}
Collecting these bounds, the lemma then follows by setting 
\begin{align*}
S_{E,2k} := 2^{-nk\bra{2\eta-\frac{1}{2}}}R_{E,2k}
\textup{ where } 
R_{E,2k}(v):= \int_{\RR^{d_2}} q_{2k}(u-iH_E^{-1}v)e^{-\frac{1}{2}u^tH_E u}\, du,
\end{align*} 
for each $k\in\{2,\ldots,\kappa\}$. 
The polynomial $R_{E,2k}$ is of degree $2k$ with coefficients bounded by $C2^{nk\bra{2\eta-\frac{1}{2}}}$. 
The proof is complete. 
\end{proof}

\section{Proof of the coupling construction}\label{s-coupling-proof}

We are now in a position to prove Proposition \ref{p-coupling-y-z}, which in turn establishes Theorem \ref{t-coupling}.
Throughout the proof we replace the bulky notation of $\abs{\norm{Y_E-Z_E}}_{L^p}$ with $\abs{Y_E-Z_E}_{L^p}$.

\begin{proof}[Proof of Proposition \ref{p-coupling-y-z}]
The idea of Davie's original proof in \cite{davie2014kmt} is to construct a coupling of $Y_E$ and $Z_E$ recursively, starting with the base case $E_0 = \{0,1,\ldots,2^m-1\}$ and proceeding by successive bisection to smaller dyadic sets.

To begin the construction, we fix a constant $\eta\in (0,\frac{1}{44})$ and let $\kappa$ be the smallest integer such that $\kappa\geq 2p$. It follows that any constant dependent upon both $\kappa$ and $p$ can be made solely dependent on $p$. Intuitively, the inequality $\kappa\geq 2p\geq 2$ makes sense. If we desire a  stronger $L^p$-bound, the degree ($2\kappa$) of the polynomial approximation produced by Lemma \ref{l-davie-lemma-4-new} will have to increase. 

\textbf{Initial step.} 
We start the construction by finding a coupling between $Y_{E_0}$ and $Z_{E_0}$. 
To this end, let $\Ee_0$ be the event $\{ \norm{G_r} \leq 2^{m\eta} : \textup{ for all } r\in E_0\}$. Note that under $\Ee_0$, $\norm{H_{E_0}} \leq 2^{2m\eta}$ (and recall that $\norm{H_{E_0}^{-1}} \leq 12$ regardless). Thus provided $\Ee_0$ holds, we apply Lemma \ref{l-davie-lemma-4-new} to find that for all $\abs{y} \leq 2^{m\eta}$, 
\begin{equation}\label{e-davie-est}
\abs{\frac{f_{E_0}(y)}{\phi_{H_{E_0}}(y)} - \seq{1+r_{E_0}}(y)} \leq C_\kappa2^{m\kappa\bra{10\eta-\frac{1}{2}}}, 
\end{equation}
where
\[
r_{E_0}:=\sum_{k=2}^\kappa 2^{nk\bra{2\eta-\frac{1}{2}}}S_{E_0,2k}. 
\]
Here, each $S_{E_0,2k} \in P(\RR^{d_2})$ is a polynomial of degree $2k$ for which the absolute values of its coefficients are bounded by some constant independent of $\eta,H_{E_0},k$. 
We write $y=H_{E_0}^{1/2}u$ and define the probability density $h(u)=(\textup{det} \,H_{E_0})^{1/2} f_{E_0}(H^{1/2}_{E_0} u)$. To convince ourselves that $h$ is a probability density, we observe that the Jacobian matrix of the linear transformation $y\mapsto H_{E_0}^{-1/2}y$ is the matrix $H_{E_0}^{-1/2}$ itself and so the chain rule gives
\[
f_{E_0}(y)= \bra{\textup{det}\, H_{E_0}^{-1/2}} h\bra{H^{-1/2}_{E_0}y} =
(\textup{det}\, H_{E_0}^{1/2})^{-1} h(u) 
= \bra{\textup{det}\, H_{E_0}}^{-1/2} h(u). 
\]
The matrix transformation also gives
\begin{align*}
\phi_{H_{E_0}}\bra{y}
&= \bra{\textup{det} \, H_{E_0}}^{-1/2} \frac{1}{\bra{2\pi}^{d_2/2}}\exp\bra{-\frac{1}{2}y^t H_{E_0}^{-1} y}
= \bra{\textup{det} \, H_{E_0}}^{-1/2} \phi(u)
\end{align*}
and this change of variables immediately implies the inequality 
\begin{equation}\label{e-great-bound}
\Ww_p\bra{f_{E_0}, \phi_{H_{E_0}}} \leq \norm{H_{E_0}^{1/2}}\Ww_p\bra{h, \phi}.
\end{equation}
Setting $\mathcal{A}=\{u\in \RR^{d_2} : \abs{H^{1/2}_{E_0}u}\leq 2^{m\eta}\}$, it follows from (\ref{e-davie-est}) that  
\begin{align}
\int_{\mathcal{A}}  \abs{h(u)-\seq{1+r_{E_0}\bra{H^{1/2}_{E_0}u}}\phi(u)}\, du 
&\leq C_\kappa2^{m\kappa\bra{10\eta-\frac{1}{2}}} \int_{\Aa} \bra{1+\abs{u}^p} \phi(u)\, du\notag\\
&\leq C_p 2^{m\kappa\bra{10\eta-\frac{1}{2}}}.\label{e-a-est} 
\end{align}
The exponential tail property of the Gaussian distribution ensures
\begin{align*}
\int_{\Aa^c}  \bra{1+\abs{u}}^p  \seq{1+r_{E_0}\bra{H^{1/2}_{E_0}u}}\phi(u)\, du 
&\leq C_p \bra{1 + \norm{H_{E_0}}^{\kappa}} \int_{\Aa^c} \abs{u}^{2\kappa} \phi(u)\, du\\
&\leq C_p 2^{2m\kappa\eta} e^{-\alpha 2^{m\eta}}
\leq C_p 2^{-\beta2^{m\eta}}
\end{align*}
for some constants $\alpha>\beta>0$. Similarly, the density $h$ conditional on $\Gg$ possesses exponential tails and so
$\int_{\Aa^c}  \bra{1+\abs{u}}^p h(u)\, du \leq C_p2^{-\beta 2^{m\eta}}$. Thus the integral of (\ref{e-a-est}) can be taken over all of $\RR^{d_2}$ with the inequality remaining true. 

In light of (\ref{e-a-est}), (now taken over $\RR^{d_2}$), we now apply Proposition \ref{p-poly-perturb-bound} with $\Sigma=I_{d_2}$, $\delta=C2^{m\kappa\bra{10\eta-\frac{1}{2}}}$, $K\leq C$, $n_0=2$, $n=\kappa$ and $\varepsilon=2^{m\bra{2\eta-\frac{1}{2}}}$ to find that conditional of $\Ee_0$, 
\begin{align*}
\Ww_p\bra{h,\phi} &\leq C_{\kappa,p} \bra{  2^{2m\bra{2\eta-\frac{1}{2}}} + 2^{m\frac{\kappa}{p}\bra{10\eta-\frac{1}{2}}} + 2^{m\frac{\kappa+1}{p}\bra{2\eta-\frac{1}{2}}}}
\leq C_{\kappa,p} 2^{m\bra{21\eta-1}}. 
\end{align*}
Here, we have used the fact that $\frac{\kappa}{p}\geq 2$. Recalling (\ref{e-great-bound}), we deduce that
\begin{equation}\label{e-new-was-bound-2}
\Ww_p\bra{f_{E_0}, \phi_{H_{E_0}}} \leq \norm{H_{E_0}^{1/2}} \Ww_p\bra{h,\phi} \leq C_{\kappa,p} 2^{m\bra{22\eta-1}}
\end{equation}
since under $\Ee_0$ we have $\norm{H_{E_0}} \leq 2^{2m\eta}$. 
To finish the initial step, we recall the exponential tail bounds of $\norm{G_r}^2$ given by Lemma \ref{l-h-exp-tails} to deduce that for all $\alpha \in (0,\frac{1}{48d})$:
\begin{align*}
\PP\bra{\Ee_0^c} 
\leq \sum_{r\in E_0} \PP\bra{\norm{G_r} \geq 2^{ m\eta}}
&\leq 2^m\PP\bra{e^{\alpha\norm{G_0}^2} \geq e^{\alpha 2^{2 m\eta}}}\\
&\leq 2^m e^{-\alpha 2^{2m\eta}} \EE\bra{e^{\alpha \norm{G_0}^2}}\\
&=  C_{\alpha}\exp\bra{-\alpha 2^{2 m\eta} + m\log 2}.
\end{align*}
If $\Ee_0$ fails we simply construct independent copies of $Y_{E_0}$ and $Z_{E_0}$. Hence, from (\ref{e-new-was-bound-2}) and H\"{o}lder's inequality, we can find a coupling of $Y_{E_0}$ and $Z_{E_0}$ such that unconditionally
\begin{align*}
\EE \bra{\abs{Y_{E_0}-Z_{E_0}}^p}
&\leq \EE \bra{\abs{Y_{E_0}-Z_{E_0}}^p\id_{\Ee_0}} + \EE\bra{ {\abs{Y_{E_0}  + Z_{E_0}}^p}\id_{\Ee_0^c}}
\leq C_{\kappa,p}2^{mp\bra{22\eta-1}}
\end{align*}
for some constant $C_{\kappa,p}>0$. 

\textbf{Recursive step.} 
Let $E$ be a dyadic set of size $2^n$. Then we can write $E=F\cup G$ where $F$ and $G$ are disjoint dyadic sets of size $2^{n-1}$. Note that
\[
Y_F + Y_G = 2^{1/2}Y_E \textup{ and } Z_F+Z_G=2^{1/2}Z_E. 
\]
We suppose a coupling between $Y_E$ and $Z_E$ has been defined, conditional on $\Gg$. In other words, for each choice of $\{W^{(j)}\}_{j=0}^{N-1}$, we have a joint distribution of $Y_E$ and $Z_E$ with the correct conditional marginal distributions. We wish to extend this coupling to a coupling between $(Y_F,Y_G)$ and $(Z_F,Z_G)$. 

For each $x\in \RR^{d_2}$, let $f_x$ denote the the density of $Y_E$ conditional on $Y_E=x$ and on $\Gg$. Similarly, let $g_x$ be the corresponding density for $Z_E$. As noted in \cite{davie2014kmt}, the conditional distribution of $Z_F$, given $Z_E=x$ and $\Gg$, is $N(Jx,H)$ where $J=H_FH_E^{-1}$ and $H=\frac{1}{2}H_FH_E^{-1}H_G$. Thus $g_x$ is the density of $N(Jx,H)$. 

We need to find a coupling between $Y_F$ and $Z_F$ conditional on $Y_E=x$ and $Z_E=\tilde{x}$. To do this we need a coupling between the distributions with densities $f_x$ and $g_{\tilde{x}}$. However, we shall instead construct a coupling between $f_x$ and $g_x$, then use the fact that $g_{\tilde{x}}$ is just $g_x$ translated by $J(x-\tilde{x})$. 


We begin by noting that $f_x(y)=\frac{2^{1/2}f_F(y)f_G(2^{1/2}x-y)}{f_E(x)}$. Then provided the event $\Ee = \{\norm{G_r} \leq 2^{n\frac{\eta}{2p}} : \textup{for all } r\in E\}$ holds, we apply Lemma \ref{l-davie-lemma-4-new} to each of $E,F,G$ to find:
\begin{equation}\label{e-davie-28}
\abs{\frac{f_x(y)}{g_x(y)} - \seq{1+r_x(y)}g_x(y)} \leq C_\kappa2^{n\kappa\bra{10\eta-\frac{1}{2}}} \textup{ for all } \abs{x},\abs{y} \leq 2^{n{\eta}/{p}},
\end{equation}
where $r_x(y)= r_F(y)+r_G(2^{1/2}x-y)-r_E(x)$. Note that 
\[
r_E(y)=\sum^\kappa_{k=2} 2^{nk\bra{2\eta-\frac{1}{2}}} S_{E,2k},
\] 
where each $S_{E,2k}\in P(\RR^{d_2})$ is a polynomial of degree $2k$ with the absolute value of its coefficients bounded by some constant independent of $\eta,H,k$.
Corresponding decompositions and associated properties hold for $r_F$ and $r_G$. 

Next, we define the set 
\[
\Omega := \seq{x\in \RR^{d_2} : \EE\bra{\abs{Y_F}^p \id_{\seq{\abs{Y_F}\geq 2^{n\eta/p }}} \big| \,Y_E=x \textup{ and } \Gg} > 2^{-n\kappa/2 }}. 
\]
Utilising the inequalities of Markov and H\"{o}lder gives 
\begin{align*}
\PP\bra{Y_E\in\Omega \big| \Gg}
&\leq 2^{n\kappa/2}\EE_\Gg\bra{\abs{Y_F}^p \id_{\seq{\abs{Y_F}\geq 2^{n\eta /p}}}}\\
&\leq2^{n\kappa/2 } \EE_\Gg\bra{\abs{Y_F}^{pq}}^{1/q} \PP\bra{\abs{Y_F}^{s} \geq 2^{n\eta \frac{s}{p}} \big| \Gg}^{1/r}\\
&\leq2^{n\kappa/2} \EE_\Gg\bra{\abs{Y_F}^{pq}}^{1/q} 2^{-n\eta \frac{s}{pr}}\EE_\Gg\bra{\abs{Y_F}^s}^{1/r}
\leq 
C2^{n\bra{\frac{\kappa}{2}-\eta\frac{s}{pr}}} \norm{H_F}^{\frac{1}{2}(p+\frac{s}{r})}.
\end{align*}
where $s\geq 1$ and $q,r> 1$ satisfy $q^{-1}+r^{-1}=1$. 
Under the assumption of $\Ee$ we have $\norm{H_F} \leq 2^{\frac{n}{2}\cdot\frac{\eta}{p}}$ (recall $F$ is of size $2^{n/2}$), and so, 
\begin{align*}
\PP\bra{Y_E \in \Omega \big| \Gg} 
&\leq C2^{n\bra{\frac{\kappa}{2}-\eta\frac{s}{pr}} +\eta\frac{n}{4}\bra{1 + \frac{s}{pr}}}
= C2^{\frac{n}{4}\bra{2\kappa+\eta\bra{1-\frac{3s}{pr}}}}.
\end{align*}
Taking $s\geq \frac{pr}{3}(1+\frac{2}{\eta}(\kappa+4p))$ ensures that $\frac{n}{4}(2\kappa+\eta(1-\frac{3s}{pr})) \leq -2pn$ and we conclude that
\begin{equation}\label{e-calculating-prob}
\PP\bra{Y_E \in \Omega \big| \Gg} \leq C2^{-2np}. 
\end{equation}
Define the event $\hat{\Ee}:=\Ee \cap \{Y_E \notin \Omega\}$. Note that under $\hat{\Ee}$, we have 
\[
\norm{H} \leq \frac{1}{2}\norm{H_F}\norm{H_E^{-1}}\norm{H_G} \leq \frac{12}{6}\bra{2^{\frac{n}{2}\cdot \frac{\eta}{p}}}^2 = C2^{n\eta/p}.
\]

Let us write $x=Y_E$ for shorthand. 
In contrast to the initial step, we cannot directly apply the polynomial perturbation result of Proposition \ref{p-poly-perturb-bound} to the conditional distribution of $Y_F$ because $g_x \sim N(Jx,H)$ is not centred. Instead, we need need to make the change of variable $y:=Jx+H^{1/2}u$, so that now we have $g_x(y)=(\textup{det}\, H)^{-1/2} \phi(u)$. 
Similarly, we define $h_x(u):= (\textup{det}\, H)^{1/2}f_x(Jx+H^{1/2}u)$. 

Then provided $\hat{\Ee}$ holds, (\ref{e-davie-28}) implies the estimate 
\begin{equation*}
\int_{\Aa} \bra{1+\abs{u}}^p \abs{h_x(u) - \seq{1+r_x\bra{Jx+H^{1/2}u}}\phi(u)}\, du \leq C_{\kappa,p}2^{n\kappa\bra{10\eta-\frac{1}{2}}},
\end{equation*}
where $\Aa=\{u\in \RR^{d_2} : \abs{Jx+H^{1/2}u} \leq 2^{n{\eta}/{p}}\}$.
If $u\notin \Aa$, that is $\abs{y} \geq 2^{n{\eta}/{p}}$, then $\abs{H^{1/2}u} \leq 2\abs{y}$ and so $\abs{u}\leq C2^{n\frac{\eta}{2p}}\abs{y}$. 
As previously argued in \cite{davie2014kmt}, using the fact that $x\notin \Omega$ under $\hat{\Ee}$ it follows that 
\[ 
\int_{\Aa^c} \bra{1+\abs{u}}^p h_x(u)\, du \leq C_{\kappa,p}2^{n\bra{\eta-\frac{\kappa}{2}}}. 
\]
Moreover, the exponential tail property of the Gaussian distribution ensures that
\[
\int_{\Aa^c} \bra{1+\abs{u}}^p \abs{1 + r_x\bra{Jx+H^{1/2}u}}\phi(u)\, du \leq C_{\kappa,p}2^{-n\kappa/2}. 
\]
Combining the last three estimates yields
\begin{equation*}\label{e-davie-31}
\int_{\RR^{d_2}} \bra{1+\abs{u}}^p \abs{h_x(u) - \seq{1+r_x\bra{Jx+H^{1/2}u}}\phi(u)}\, du \leq C_{\kappa,p}2^{n\kappa\bra{10\eta-\frac{1}{2}}}.
\end{equation*}
As in the initial case, in light of the previous estimate we apply Proposition \ref{p-poly-perturb-bound} with $\Sigma=I_{d_2}$, $\delta=C2^{n\kappa\bra{10\eta-\frac{1}{2}}}$, $K\leq C$, $n_0 = 2$, $n=\kappa$ and $\varepsilon=2^{n\bra{10\eta-\frac{1}{2}}}$ to find that, conditional on $\hat{\Ee}$,
\begin{align*}
\Ww_p\bra{h_x, \phi}
&\leq C_{\kappa,p} 2^{n\bra{21\eta-1}}.
\end{align*}
Again we have used the fact $\kappa\geq 2p$. Assuming $\hat{\Ee}$, $\norm{H^{1/2}} \leq C2^{n\frac{\eta}{2p}}$ and so 
\[
\Ww_p\bra{f_x,g_x} \leq \norm{H^{1/2}}\Ww_p\bra{h_x,\phi} \leq C_{\kappa,p}2^{n\bra{22\eta-1}}.
\] 

As Davie writes in \cite{davie2014kmt}, the situation can be summarised as follows: conditional on $Y_E=x$ and assuming $\hat{\Ee}$, we can find a random variable $Z_F^*$ with density $g_x$ such that $\abs{Z_F^* - Y_F}_{L^p} \leq C_{\kappa,p}2^{n\bra{22\eta-1}} $. If $\hat{\Ee}$ fails then we generate an independent random variable $Z^*$ with density $g_x$ and set $Z_F^*:=Z^*$. 
As in the initial case, the exponential tail bounds of Lemma \ref{l-h-exp-tails} imply that $\PP\bra{\Ee^c} \leq C2^{-2np}$ and so together with (\ref{e-calculating-prob}) we certainly have $\PP\bra{\hat{\Ee}^c} \leq C2^{-2np}$. 
Thus taking expectations over $\Gg$ and $Y_E$ and applying the tower property, we find that unconditionally, 
\begin{equation}\label{e-d1}
\EE\bra{\abs{Z_F^*-Y_F}^p} \leq C_{\kappa,p}2^{np\bra{22\eta-1}} .
\end{equation}
We can now complete the recursive step by defining
\begin{equation}\label{e-d2}
Z_F:= Z_F^* + H_F H_E^{-1} (Z_E - Y_E), 
\end{equation}
which has the correct conditional density $g_{\tilde{x}}$ with $\tilde{x}=Z_E$. Then we must have $Z_G= 2^{1/2}Z_E - Z_F$. Moreover, setting $Z_F^* := 2^{1/2}Y_E - Z_F^*$, (\ref{e-d1}) and (\ref{e-d2}) hold with $F$ replaced with $G$.

\textbf{Conclusion of the proof.}
Consider a given dyadic set $E$ of size $2^n$. We can uniquely write the expansion $E=E_k \subseteq E_{k-1} \subseteq \ldots \subseteq E_0$ where $k=m-n$ and, for each $j$, $E_j$ is a dyadic set of size $2^{m-j}$. From (\ref{e-d2}) we obtain
\[
Z_E - Y_E = \sum_{j=1}^k H_{E_k}H^{-1}_{E_j} (Z^*_{E_j} - Y_{E_j}) + H_{E_k}H_{E_0}^{-1}(Z_{E_0}-Y_{E_0}). 
\]
By the H\"{o}lder inequality, the exponential tail estimates of $\norm{H_E}$ and the fact that $\norm{H_E^{-1}}\leq 12$, for all $r \in [1,p)$ we have
\begin{align*}
\abs{{Z_E - Y_E}}_{L^r}
&\leq 12\sum^k_{j=1} \abs{\norm{H_{E_k}}}_{L^{q}} \abs{{Z^*_{E_j}-Y_{E_j}}}_{L^{p}} + 12\abs{\norm{H_{E_k}}}_{L^{q}}\abs{{Z_{E_0}-Y_{E_0}}}_{L^{p}}\\
&\leq C_{\kappa,p}\sum_{j=0}^k 2^{(m-j)\bra{22\eta-1}}
\leq C_{\kappa,p} 2^{(m-k)\bra{22\eta-1}} = C_{\kappa,p} 2^{n\bra{22\eta-1}}.
\end{align*}
Here, $q=\frac{pr}{p-r}$ (so that $q^{-1}+r^{-1}=1$). 
Taking suitably small $\eta\in (0,\frac{1}{44})$ ensures that $22\eta-1<-\frac{1}{2}$ and the proof is complete. 
\end{proof}

\section*{Acknowledgements}

The author would like to thank Prof.~Sandy Davie of Edinburgh for answering many questions about his original proofs in \cite{davie2014kmt, davie2014pathwise, davie2015poly}. 
The research is supported by the European Research Council under the European Union's Seventh Framework Programme (FP7-IDEAS-ERC, ERC grant agreement nr. 291244).

\bibliographystyle{plain}
\bibliography{hoff_library,coupling_library,homemade_library}

\end{document}